\theoremstyle{plain}
\newtheorem{thm}{Theorem}[section]
\newtheorem{lem}[thm]{Lemma}
\newtheorem{cor}[thm]{Corollary}
\newtheorem{prop}[thm]{Proposition}
\newtheorem{mainthm}{Main Theorem}
\theoremstyle{definition}
\newtheorem{defn}[thm]{Definition}
\newtheorem{example}[thm]{Example}
\theoremstyle{remark}
\newtheorem{rem}[thm]{Remark}
\crefname{thm}{theorem}{theorems}
\crefname{lem}{lemma}{lemmas}
\crefname{cor}{corollary}{corollaries}
\crefname{prop}{proposition}{propositions}
\crefname{mainthm}{theorem}{theorems}
\crefname{maincor}{corollary}{corollaries}
\crefname{defn}{definition}{definitions}
\crefname{conj}{conjecture}{conjectures}
\crefname{example}{example}{examples}
\crefname{exercise}{exercise}{exercises}
\crefname{prob}{problem}{problems}
\crefname{quest}{question}{questions}
\crefname{rem}{remark}{remarks}
\crefname{claim}{claim}{claims}
\crefname{axiom}{axiom}{axioms}
\crefname{hyp}{hypothesis}{hypotheses}
\crefname{notation}{notation}{notations}
\crefname{case}{case}{cases}
\numberwithin{equation}{section}
\definecolor{darkgreen}{cmyk}{1,0,1,.2}
\definecolor{m}{rgb}{1,0.1,1}
\newdimen\theight
\def\TeXref#1{%
             \leavevmode\vadjust{\setbox0=\hbox{{\tt
                     \quad\quad  {\small \textrm #1}}}%
             \theight=\ht0
             \advance\theight by \lineskip
             \kern -\theight \vbox to
             \theight{\rightline{\rlap{\box0}}%
             \vss}%
             }}%
\begin{document}
 \title{Transversal Hard Lefschetz theorem  on  transversely symplectic foliations} 

\author[J.A. \'Alvarez L\'opez]{Jes\'us A. \'Alvarez L\'opez}
\address{Departamento de Matem\'aticas\\
         Facultade de Matem\'aticas\\
         University of Santiago de Compostela\\
         15782 Santiago de Compostela\\ Spain}
\email{jesus.alvarez@usc.es}

 \author[S.~D.~Jung]{Seoung Dal Jung}
 \address{Department of Mathematics\\
 Jeju National University\\
 Jeju 63243\\
 Republic of Korea}
 \email{sdjung@jejunu.ac.kr}
 
 \thanks{The first author is partially supported by FEDER/Ministerio de Ciencia, Innovaci\'on y Universidades/AEI/MTM2017-89686-P, and Xunta de Galicia/2015 GPC GI-1574.  The second author was also supported by the National Research Foundation of Korea (NRF) grant funded by the Korea government (MSIP) (NRF-2018R1A2B2002046).}
  
\subjclass[2010]{53C12; 53C27; 57R30}
\keywords{Transversely symplectic foliation, Basic cohomology theory, Transversely symplectic harmonic form, Transversal hard Lefschetz  theorem, Transversely  K\"ahler  foliation.}

\begin{abstract}  
We study the transversal hard Lefschetz theorem  on a transversely symplectic foliation.  This article extends the results of transversally symplectic flows (H.K.~Pak, ``Transversal harmonic theory for transversally symplectic flows'', J. Aust. Math. Soc. 84 (2008), 233--245) to the general transversely symplectic foliation. 
\end{abstract}
\maketitle

\section{Introduction}
On a compact Riemannian manifold,  the classical Hodge theory states that any cohomology class contains just  one harmonic form because  of  $\mathcal H^r(M) \cong H^r(M)$, where $\mathcal H^r(M)$ and $H^r(M)$ are the harmonic space and de Rham cohomology group, respectively. But on sympleictic manifold, we can not define the harmonic form as in Riemannian case. 

In 1988, J.-L.~Brylinski \cite{BR}  introduced the notion of symplectic harmonic form on a symplectic manifold.
That is, a differential form $\phi$ is  \emph{symplectic harmonic} if $d\phi =\delta\phi=0$, where $\delta$ is a  symplecitc adjoint operator of $d$.   He proved that on a compact K\"ahler  manifold, any de Rham cohomology class has a symplectic harmonic form.  However,  this is not true on an arbitrary symplectic manifold.  In fact,  O.~Mathieu \cite{MA} proved the following hard Lefschetz theorem on a symplectic manifold.

\begin{thm}\label{t: 1}
On a  symplectic manifold $(M,\omega)$ of dimension $2n$, the following  properties are equivalent: 
\begin{enumerate}[{\rm(1)}]

\item\label{i: t-1-(1)} Any cohomology class contains at least one harmonic form.

\item\label{i: t-1-(2)} {\rm(}Hard Lefschtez property.{\rm)} For all $r\leq n$,  the map $L^r:H^{n-r}(M)\to H^{n+r}(M)$ is surjective, where $L[\phi]=[\omega\wedge\phi]$ for any closed form $\phi$.
\end{enumerate}
\end{thm}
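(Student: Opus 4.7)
The plan is to exploit the pointwise $\mathfrak{sl}(2,\mathbb R)$-action on $\Omega^*(M)$ determined by $\omega$. Let $L=\omega\wedge\cdot$, let $\Lambda$ be its pointwise symplectic adjoint, and set $H=[L,\Lambda]$; on $\Omega^k(M)$ one has $H=(k-n)\,\mathrm{id}$. Since $d\omega=0$ we have $[d,L]=0$, and an easy induction then yields $\delta L^r = L^r\delta + rL^{r-1}d$. The symplectic codifferential satisfies $\delta = d\Lambda-\Lambda d$, so a closed form $\phi$ is symplectic harmonic precisely when $d\Lambda\phi=0$. Applying Weyl-type representation theory fibrewise, I will also use that $L^r:\Omega^{n-r}(M)\to\Omega^{n+r}(M)$ is an isomorphism at the level of forms.

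For $(1)\Rightarrow(2)$, I start from a class $c\in H^{n+r}(M)$ and, by hypothesis, a symplectic harmonic representative $\phi$, writing $\phi=L^r\psi$ via the pointwise isomorphism. The goal is to show $d\psi=0$, so that $[\psi]\in H^{n-r}(M)$ satisfies $L^r[\psi]=c$. Applying $d$ to $\phi=L^r\psi$ gives $L^r d\psi=0$; tracing this through the form-level Lefschetz decomposition, together with the description of $\ker L$ via primitive components, will force $d\psi$ to be \emph{primitive} of degree $n-r+1$. Applying $\delta$ to $\phi=L^r\psi$ via the commutator identity and using $\delta\phi=0$ will yield $L^{r-1}(L\delta\psi + r\,d\psi)=0$, whence by form-level Lefschetz $d\psi\in\mathrm{Im}(L)$. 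Since a primitive form of degree $\leq n$ lying in $\mathrm{Im}(L)$ must vanish (directly from the Lefschetz decomposition), $d\psi=0$ as required.

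For $(2)\Rightarrow(1)$ I will argue by induction. Given a closed $\alpha\in\Omega^k(M)$, the aim is to modify $\alpha$ within its cohomology class so that $d\Lambda\alpha=0$. Using the Lefschetz decomposition $\alpha=\sum_s L^s\alpha_s$ with $\alpha_s$ primitive of degree $k-2s$, I will analyse the obstruction $d\Lambda\alpha$ component-by-component under the $\mathfrak{sl}(2,\mathbb R)$-action. The hard Lefschetz hypothesis, in the form of surjectivity of $L^r:H^{n-r}(M)\to H^{n+r}(M)$, will allow me to represent the relevant auxiliary classes by closed primitive forms and to solve the cohomological equation needed to cancel $d\Lambda\alpha$ by an exact correction term; the induction hypothesis supplies these exact forms at each step.

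The main obstacle is the direction $(2)\Rightarrow(1)$: the correction terms must cancel the current $d\Lambda$-error without re-introducing errors in already-corrected Lefschetz components, and managing this interaction with the $\mathfrak{sl}(2,\mathbb R)$-action is where the hard Lefschetz surjectivity is essentially used and where the proof becomes delicate.
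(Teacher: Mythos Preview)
The paper does not give its own proof of Theorem~1.1; it is quoted from Mathieu~\cite{MA} as background. The nearest thing to a proof in the paper is the argument for the foliated analogue, Theorem~4.13, which the authors say ``is similar to ones in~\cite{Pa} and~\cite{Ya}'' and then carry out in detail with $d_\kappa,\delta_\kappa$ in place of $d,\delta$.

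Your plan lines up with that argument. For $(1)\Rightarrow(2)$, what you describe is exactly the content of the paper's Proposition~4.12 (that $L^r$ is an isomorphism on symplectic-harmonic forms): writing a harmonic $\phi\in\Omega^{n+r}$ as $L^r\psi$ via the form-level $\mathfrak{sl}(2)$ isomorphism and then showing $d\psi=0$ from $L^r d\psi=0$ and $L^{r-1}(L\delta\psi+r\,d\psi)=0$ is correct and is the ``easy'' proof the paper alludes to. A slightly cleaner packaging, used implicitly in the paper, is to note that $L$ and $\Lambda$ both preserve harmonic forms (from $[L,d]=0$, $[L,\delta]=-d$, $[\Lambda,\delta]=0$, $[\Lambda,d]=-\delta$), so the harmonic space is an $\mathfrak{sl}(2)$-submodule and $L^r$ is automatically an isomorphism between its weight-$(-r)$ and weight-$r$ pieces.

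For $(2)\Rightarrow(1)$ your outline is in the right spirit but vaguer than the paper's implementation. The paper (in the proof of Theorem~4.13) does \emph{not} work component-by-component in the form-level Lefschetz decomposition of $\alpha$. Instead it inducts on the degree, and at degree $n-r$ first splits at the \emph{cohomology} level: using surjectivity of $L^{r+2}$ one writes $[\phi]=([\phi]-L[\psi])+L[\psi]$ with $[\phi]-L[\psi]\in PH^{n-r}$; the $\operatorname{im}L$ summand is handled by the induction hypothesis. For a primitive class one then passes to the form level: since $L^{r+1}:\Omega^{n-r-1}\to\Omega^{n+r+1}$ is an isomorphism, $L^{r+1}\phi=d\psi=d(L^{r+1}\eta)$ gives $L^{r+1}(\phi-d\eta)=0$, i.e.\ $\phi-d\eta$ is primitive as a form, hence $\delta(\phi-d\eta)=[d,\Lambda](\phi-d\eta)=0$. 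This two-step ``primitive in cohomology, then primitive as a form'' mechanism is the concrete replacement for your ``cancel $d\Lambda\alpha$ without re-introducing errors'' step, and it avoids the bookkeeping you flag as the main obstacle.
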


\bigskip
The classical Hodge theory on a Riemannian foliation was studied by F.W.~Kamber and Ph.~Tondeur \cite{KT} in 1988.  It states that there is a canonical isomorphism 
\[
\mathcal H_B^r(\mathcal F) \cong H_B^r(\mathcal F)\;,
\]
where $\mathcal H_B^r(\mathcal F)$ and $H_B^r(\mathcal F)$ are the basic harmonic space and basic de Rham cohomology group, respectively.  In a similar way,  a version of \Cref{t: 1} can be considered for transversely symplectic foliations. 

In 2008, H.~Pak \cite{Pa} introduced the transversely symplectic harmonic form  and studied the transverse hard Lefschetz theorem on a transversally symplectic flow, obtaining the following version of \Cref{t: 1} for flows.

\begin{thm}\label{t: 2}
Let $(\mathcal F,\omega)$ be a tense, transversally symplectic flow on a  manifold $M$ of dimension $2n+1$. Then the following  properties are equivalent:
\begin{enumerate}[{\rm(1)}]

\item\label{i: t-2-(1)} Any basic cohomology class for $\mathcal F$ has a transversely symplectic harmonic form.

\item\label{i: t-2-(2)} {\rm(}Transverse hard Lefschetz property.{\rm)} For all $r\leq n$,  the map $L^r:H_B^{n-r}(\mathcal F) \to H_B^{n+r}(\mathcal F)$ is surjective, where $L[\phi]=[\omega\wedge\phi]$ for any basic closed form $\phi$.

\end{enumerate}
\end{thm}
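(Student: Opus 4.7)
The plan is to adapt the Mathieu--Yan approach from \cref{t: 1} to the basic de Rham complex of a tense transversally symplectic flow. First I would set up an $\mathfrak{sl}(2,\mathbb R)$-action on basic forms: let $L\colon\Omega_B^r(\mathcal F)\to\Omega_B^{r+2}(\mathcal F)$ denote exterior multiplication by $\omega$, let $\Lambda$ be contraction with the transverse Poisson bivector dual to $\omega$, and let $H$ act on $\Omega_B^r(\mathcal F)$ as multiplication by $r-n$. Since the transverse bundle $Q=TM/T\mathcal F$ has rank $2n$ and carries a pointwise symplectic structure, the triple $(L,\Lambda,H)$ defines a fiberwise finite-dimensional $\mathfrak{sl}(2,\mathbb R)$-representation on $\Lambda^\ast Q^\ast$, yielding the basic Lefschetz decomposition $\Omega_B^r(\mathcal F)=\bigoplus_k L^k P_B^{r-2k}(\mathcal F)$ and the isomorphism $L^r\colon\Omega_B^{n-r}(\mathcal F)\to\Omega_B^{n+r}(\mathcal F)$ at the level of basic forms.

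Next, using a bundle-like metric provided by the tense hypothesis, I would introduce the transverse symplectic star $\ast_s$ and the basic symplectic codifferential $\delta=(-1)^{r+1}\ast_s d\,\ast_s$ on $\Omega_B^r(\mathcal F)$. The Brylinski--Koszul identity $\delta=[d,\Lambda]$, combined with $[L,d]=0$, yields $\delta^2=0$, $\{d,\delta\}=0$, and the fact that $L$ and $\Lambda$ preserve the space $\mathcal H_B^\ast(\mathcal F)$ of symplectic harmonic basic forms. The tense hypothesis is essential at this step: it ensures that the mean curvature one-form is basic, so that the transverse Hodge $\ast$-operator restricts to $\Omega_B^\ast(\mathcal F)$ compatibly with $\ast_s$; hence $\delta$ genuinely acts on the basic complex and the algebraic identities above transfer from pointwise models to the basic setting.

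For \cref{i: t-2-(1)}$\Rightarrow$\cref{i: t-2-(2)}, the space $\mathcal H_B^\ast(\mathcal F)$ is thus a locally finite $\mathfrak{sl}(2,\mathbb R)$-representation, and the standard Lefschetz theorem for such representations forces $L^r\colon\mathcal H_B^{n-r}(\mathcal F)\to\mathcal H_B^{n+r}(\mathcal F)$ to be surjective; together with the hypothesis that every basic cohomology class contains a symplectic harmonic form, this gives the hard Lefschetz property on $H_B^\ast(\mathcal F)$. For \cref{i: t-2-(2)}$\Rightarrow$\cref{i: t-2-(1)}, I would prove a basic version of Merkulov's theorem: hard Lefschetz implies the basic symplectic $d\delta$-lemma $\operatorname{Im}d\cap\ker\delta=\operatorname{Im}d\delta=\ker d\cap\operatorname{Im}\delta$. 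Granting this, for a $d$-closed basic form $\eta$ one observes that $\delta\eta$ is both $d$-closed (since $\{d,\delta\}=0$) and $\delta$-exact, so $\delta\eta=d\delta\gamma$ for some basic $\gamma$, and then $\eta+d\gamma$ is symplectic harmonic and cohomologous to $\eta$.

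The main obstacle will be the basic version of Merkulov's $d\delta$-lemma, which in the non-foliated case is proved by induction on $\mathfrak{sl}(2)$-weight using both the Lefschetz decomposition of forms and hard Lefschetz on cohomology. Transferring this induction to $\Omega_B^\ast(\mathcal F)$ requires careful control over how the primitive decomposition interacts with $d$ and $\delta$, and this is again where tenseness becomes indispensable, ensuring full compatibility between Kamber--Tondeur basic Hodge theory and Brylinski's symplectic calculus on $\Omega_B^\ast(\mathcal F)$.
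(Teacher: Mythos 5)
Your direction (1)$\Rightarrow$(2) matches the paper's argument, but your direction (2)$\Rightarrow$(1) takes a genuinely different and, as written, incomplete route, and there is a problem with which codifferential you are using. First, the operator $\delta=(-1)^{r+1}\bar\star\, d\,\bar\star=[d_B,\Lambda]$ that makes all of your $\operatorname{sl}(2)$ identities work is $\delta_T$, which for a tense but non-taut flow is \emph{not} the symplectic adjoint of $d_B$: the adjoint is $\delta_B=\delta_T-i(\kappa^\sharp)$, and one has $[L,\delta_B]=-d_B+\epsilon(\kappa)$, so $L$ does not preserve $\delta_B$-harmonic forms when $\kappa\neq 0$. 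This is exactly the obstruction that forces the paper to replace $d_B,\delta_B$ by the modified operators $d_\kappa=d_B-\tfrac12\epsilon(\kappa)$, $\delta_\kappa=\delta_B+\tfrac12 i(\kappa^\sharp)$ and to work in the modified cohomology $H_\kappa^*(\mathcal F)$. With $\delta_T$ your argument proves the Lin/Bak--Czarnecki version of the statement (harmonic meaning $d_B\phi=\delta_T\phi=0$); it does not address the ``transversely symplectic harmonic'' version in which $\delta$ is the genuine adjoint, and you cannot simply swap in $\delta_B$ without losing the identities $[L,\delta]=-d$ and $\delta=[d,\Lambda]$ on which everything rests. Tenseness alone does not reconcile the two; you either need $\kappa=0$ (tautness) or the $\kappa$-modification.

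Second, your reduction of (2)$\Rightarrow$(1) to a basic $d\delta$-lemma leaves the entire content of that implication unproven: you flag the basic Merkulov lemma as ``the main obstacle'' but give no argument for it, and its transfer to the foliated setting is not routine. Merkulov's proof that hard Lefschetz implies the $dd^{\Lambda}$-lemma leans on Poincar\'e duality to upgrade surjectivity of $L^r$ on cohomology to bijectivity, and basic Poincar\'e duality fails for non-taut foliations (indeed $H_B^{2n}(\mathcal F)=0$ in that case, so $L^n:H_B^0\to H_B^{2n}$ is surjective but not injective). The paper avoids all of this by following Mathieu--Yan--Pak: induct on the degree, write $[\phi]=([\phi]-L[\psi])+L[\psi]$ using surjectivity of $L^{r+2}$, dispose of $\operatorname{im}L$ by the inductive hypothesis together with the fact that $L$ preserves harmonic forms, and for a primitive class use the pointwise isomorphism $L^{r+1}:\Omega_B^{n-r-1}(\mathcal F)\to\Omega_B^{n+r+1}(\mathcal F)$ to produce a primitive closed representative $\phi-d\eta$, which is automatically $\delta$-closed because $\delta=[d,\Lambda]$ kills primitive closed forms. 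I would recommend replacing the Merkulov detour by this induction, and fixing the codifferential issue by working with $d_\kappa,\delta_\kappa$ (or restricting to the taut case).
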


Given a Riemannian metric on $M$, recall that  the foliation $\mathcal F$ is said to be \emph{isoparametric} (respectively, \emph{minimal}) if its mean curvature form is basic (respectively, zero) (see \Cref{s: transv symplectic foln}). If this condition is satisfied for some Riemannian metric, then the foliation is said to be \emph{tense} (respectively, \emph{taut}).  In 2011, L.~Bak and A.~Czarnecki \cite[Corollary~12]{BC}  extended \Cref{t: 2}  to taut  foliations of arbitrary  dimension.   Recently, Y.~Lin \cite{LI} has studied symplectic harmonic theory on transversely symplectic foliations by using the differential operator $d_B$  and $\delta_T$, which is different from $\delta_B$, the symplectic  formal adjoint of $d_B$ (if $\mathcal F$ is minimal, then $\delta_T=\delta_B$).  L.~Bak and A.~Czarnecki  \cite[Theorem~10]{BC} also gave another version of \Cref{t: 2} using $d_B$ and $\delta_T$, without requiring tautness. See also the additional publications \cite{Ib,KP,LI1} about the transversal hard Lefschetz theorem. 

In this paper, we study the transversal hard Lefschetz theorem on an arbitrary transversely symplectic foliation. 
That is, we extend \Cref{t: 2}  to an arbitrary foliation by using the modified symplectic Hodge theory. Let $d_\kappa = d_B-\frac12\kappa\wedge$ be the modified differential operator, where $\kappa$ is the mean curvature form, which $d_B\kappa=0$ on a closed manifold.  Since $d_\kappa^2=0$  on a closed manifold,  we can define the modified basic cohomology group  $H_\kappa^*(\mathcal F)=\ker d_\kappa/\operatorname{im} d_\kappa$, which was introduced by G.~Habib and K.~Richardson \cite{HR}.
Let us point out that  $H_\kappa^*(\mathcal F)$ only depends on the basic cohomology class defined by $\kappa$. Also that, if $\mathcal F$ is isoparametric with any pair of Riemannian metrics on $M$, then the corresponding mean curvature forms induce the same basic cohomology class. Then our main result is the following generalization of \Cref{t: 2}.

\begin{mainthm}[Cf.\ Theorem 4.14]
Let $(\mathcal F,\omega)$ be a transversely symplectic foliation of codimension $2n$ on a  closed manifold $M^{p+2n}$. If $\mathcal F$ is tense, then the following  properties  are equivalent:
\begin{enumerate}[{\rm(1)}]

\item\label{i: mt-(1)} Any modified basic cohomology  class contains at least one modified symplectic harmonic form.

\item\label{i: mt-(2)} For any $r\leq n$,  the homomorphism $L^r: H_\kappa^{n-r}(\mathcal F) \to H_\kappa^{n+r}(\mathcal F)$ is  surjective.

\end{enumerate}
\end{mainthm}

\begin{rem}
\begin{enumerate}[(1)]

\item  The main theorem yields \Cref{t: 2}  when $p=1$ \cite{Pa} and \cite[Corollary~12]{BC}  when $\mathcal F$ is minimal, respectively.   Also, if $\kappa=0$, then $\delta_k=\delta_T$, and therefore our main theorem agrees with \cite[Theorem~10]{BC}  and \cite[Theorem~4.1]{LI}  in this case.

\item  The transversal hard Lefschetz theorem on a transverse  K\"ahler foliation was studied  in \cite{EK} and \cite{JK}. Namely, it was proved that,   on a transverse K\"ahler foliation on a compact Riemannian manifold,  if the mean curvature form satisfies some condition, then the transversal hard Lefschetz property holds. From this fact, we can apply the main theorem to any  transverse K\"ahler foliation on a closed manifold; that is, any basic cohomology class for $\mathcal F$ has a transversely symplectic harmonic representative (cf.\ Corollary 5.3).

\end{enumerate}
\end{rem}

\section{Transversely symplectic foliation}\label{s: transv symplectic foln}

Let $M^{p+2n}$ be a smooth manifold of dimension $m=2n+p$  with a foliation $\mathcal F$.  Let $Q$ be the distribution of rank $2n$ with a nondegenerate closed $2$-form $\omega$ on $Q$ such that  $\ker\omega_x = T\mathcal F_x$ for any $x\in M$, that is, $T\mathcal F =\{\, X\in TM \ |\ i(X)\omega =0 \,\}$  and $\dim T_x\mathcal F=p$. Here $i(X)$ is  the interior product by $X$. The form $\omega$ is said to be \emph{transversely symplectic} with respect to $\mathcal F$. 
 A foliation $\mathcal F$ is said to be \emph{transversely symplectic foliation} if $M$ admits a transversely symplectic form $\omega$ with respect to $\mathcal F$ \cite{LI}. The notation $(\mathcal F,\omega)$ is used in this case.
  Trivially, the form $\omega$ is basic, that is, $\omega\in\Omega_B^2(\mathcal F)$, where 
\[
\Omega_B^*(\mathcal F)=\{\,\phi\in\Omega^*(M)\ |\ i(X)\phi= \theta(X)\phi =0,\ \forall X\in\Gamma T\mathcal F \,\}\;.
\]
Here  $\theta(X)$ is the Lie derivative with respect to $X$.

In particular, if  $\mathcal F$ is the flow generated by a nonsingular vector field $\xi$ such that $i(\xi)\omega=0$,  then $\mathcal F$ is called  the \emph{transversally  symplectic flow} and denoted by $\mathcal F_{\xi}$.
A transverse  K\"{a}hler foliation is a transversely symplectic foliation with  a basic  K\"ahler form as a transverse symplectic form.   For more
examples, see  \cite{Cw,LI}.  

Let $\{v_{1},\cdots ,v_{n},w_{1},\cdots
,w_{n}\}$ be a symplectic frame on $Q$, i.e.,
\[
\omega (v_{a},v_{b})=\omega (w_{a},w_{b})=0\;,\quad \omega (v_{a},w_{b})=\delta_{ab}\;.
\]
Then $\omega$ is locally expressed as 
\[
\omega =\sum_{a=1}^n v_a^* \wedge w_a^*\;,
\]
where $\{v_a^*,w_a^*\}$ is the dual basic frame,  that is, 
\[
v_a^*(X)=\omega(X,w_a)\;,\quad w_a^*(X)=-\omega(X,v_a)\;,
\]
 for any normal vector field $X\in \Gamma Q$.
Any vector field $X\in \Gamma Q$ is expressed as 
\[
X=\sum_{a=1}^n\{\omega(X,w_a)v_a-\omega(X,v_a)w_a\}\;.
\]

Let $\nabla$ be a connection on $Q$. 
  Then the \emph{torsion vector field} $\tau_\nabla$ is given  by
\[
\tau_\nabla =\sum_{a=1}^n T_\nabla(v_a,w_a)\;,
\]
where the torsion tensor $T_\nabla$ is defined by
\[
T_\nabla(X,Y)=\nabla_X\pi(Y)-\nabla_Y\pi(X)-\pi[X,Y]
\]
for any vector fields $X,Y\in\Gamma TM$.
It is easy to prove that the vector field $\tau_\nabla$ is well-defined; that is, it is independent to the choice of symplectic frames on $Q$. 
A  \emph{transversely symplectic connection} $\nabla $  on $Q$ is one which
satisfies $\nabla \omega =0$; that is, for all $X\in \Gamma TM$ and $Y, Z \in \Gamma Q$,
\[
X \omega(Y,Z) = \omega (\nabla_X Y,Z) + \omega(Y,\nabla_XZ)\;.
\]
For the study of an ordinary symplectic manifold,  see  \cite{Bl,Ha}.

 Without loss of generality, we assume that $\mathcal F$ is oriented. So, given an auxiliary Riemannian metric on $M$ with $Q=T\mathcal F^\perp$, there is a unique form $\chi_\mathcal F\in\Omega^p(M)$ whose restriction to the leaves is the volume form of the leaves, and such that with $i(Y)\chi_\mathcal F=0$ for any $Y\in\Gamma Q$. The $p$-form $\chi_\mathcal F$ is called the \emph{characteristic form} of $\mathcal F$.  Locally, we can describe $\chi_\mathcal F$ as follows.  Let $\{E_j\}$ ($j=1,\cdots,p$) be a local oriented orthonormal frame of $T\mathcal F$, and let $\{\alpha_j\}$ be the local 1-forms satisfying $\alpha_j(E_k)=\delta_{jk}$ and $\alpha_j(X)=0$ for any $X\in Q$.  Then $\chi_\mathcal F=\alpha_1\wedge\cdots \wedge\alpha_p$; that is,  $\chi_\mathcal F(E_1,\cdots,E_p)=1$. 
Now  the corresponding  mean curvature form $\kappa$ of $\mathcal{F}$  with respect to the auxiliary Riemannian metric is 
\[
\kappa =( -1)^{p+1}i(E_{p})\cdots i(E_{1})d\chi_{\mathcal{F}}\;,
\]%
locally. Since $\varphi _{0}:=d\chi_{\mathcal{F}}+\kappa \wedge \chi_{\mathcal{F}}$ 
satisfies $i(E_p)\cdots i(E_1)\varphi_0 =0$, we get $i\left( X_{1}\right)\cdots i\left( X_{p}\right) \varphi _{0}=0$
for all $X_j \in\Gamma T\mathcal F$.  Now let
\begin{eqnarray*}
F^r\Omega^k:=\{\phi\in \Omega^k(M)\ |\ i(X_1)\cdots
i(X_{k-r+1})\phi=0,\ \forall X_j \in\Gamma T\mathcal F\}\;.
\end{eqnarray*}
Then $\varphi_0 \in F^2 \Omega^{p+1}$, and we have the Rummler's formula   \cite{RU}
\begin{equation}  
d\chi_{\mathcal{F}}=-\kappa \wedge \chi_{\mathcal{F}}+\varphi _{0}\;. \label{Rummler formula}
\end{equation}%

\begin{example}\label{ex: contact manifold}
Let $(M^{2n+1},\alpha)$ be a contact manifold of
dimension $2n+1$, where $\alpha$ is a contact 1-form such that $\alpha\wedge (d\alpha)^n\ne 0$.
Then we have the  Reeb vector field $\xi$ such that $i(\xi)\alpha =1$ and $i(\xi)d\alpha =0$,
which determines a 1-dimensional foliation $\mathcal F_\alpha$, called the \emph{contact flow}. It is trivial that  the contact flow $\mathcal F_\alpha$ on a contact manifold $(M,\alpha)$ is a  transversely symplectic foliation with the transverse symplectic form $\omega:=d\alpha$. Moreover, $\mathcal F_\alpha$ is geodesible, i.e., $\kappa=i(\xi)d\chi_{\mathcal F}=0$, where $\chi_{\mathcal F}=\alpha$. Note that $\varphi_0=d\alpha (\ne 0)$ and  
$TM = T\mathcal F_\alpha  \oplus Q$,  where $Q=\ker \alpha$  (see \cite{Pa}). 
\end{example}

\begin{example}\label{ex: almost cosymplectic manifold}
Let $(M^{2n+1},\eta,\Phi)$ be an almost cosymplectic manifold of dimension $2n+1$, that is, $M$ admits a closed 1-form $\eta$ and a closed 2-form $\Phi$ such that $\eta\wedge \Phi^n$ is a volume form on $M$  \cite{Ib}. Then we have a Reeb vector field $\xi$ satisfying $i(\xi)\eta=1$ and $i(\xi)\Phi =0$.
Let $\mathcal F_\eta$ be  the flow generated by $\xi$, which is called the \emph{cosymplectic flow}. Then the cosymplectic flow $\mathcal F_\eta$ is  a transversely symplectic foliation with a transverse symplectic form $\Phi$.   The characteristic form $\chi_{\mathcal F}$ is given by  $\chi_{\mathcal F}=\eta$. Since $d\eta=0$, 
 $\mathcal F_\eta$ is geodesible, i.e.,  $\kappa=i(\xi)d\chi_{\mathcal F}=0$ and  $\varphi_0 =0$ on $M$. Moreover, since $\eta$ is a closed 1-form, by the Frobenius theorem  $Q=\ker\eta$ also defines  a codimension 1-foliation $\mathcal F_\eta^\perp$ transversal to $\mathcal F_\eta$ (see \cite{Pa}).
\end{example}
 
\begin{prop}[{This is also proved in \cite[Theorem 4.33]{TO}}] \label{p: 2.3} 
Let $(\mathcal F,\omega)$ be a transversely symplectic foliation on a closed oriented manifold. If $\mathcal F$ is taut, then $H^{2r}_B(\mathcal F) \ne 0$ for all $r$.
\end{prop}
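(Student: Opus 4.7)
The plan is to prove the stronger fact that $[\omega^r] \neq 0$ in $H_B^{2r}(\mathcal F)$ for every $0 \le r \le n$ (the only interesting range, since $H_B^k(\mathcal F)$ vanishes for $k > 2n$). This reduces to establishing the top-degree case $[\omega^n] \ne 0$, after which the intermediate cases follow by a straightforward wedge-product argument. The main obstacle lies in the top-degree step, where tautness enters essentially through Rummler's formula.

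First I would invoke tautness to fix a bundle-like Riemannian metric on $M$ whose mean curvature form satisfies $\kappa = 0$. For this metric, Rummler's formula reduces to $d\chi_{\mathcal F} = \varphi_0$ with $\varphi_0 \in F^2\Omega^{p+1}$, so $\varphi_0$ has at least two ``legs'' in $Q^*$ at each point. Since $\omega$ restricts fiberwise to a symplectic form on $Q$, the top power $\omega^n$ is a nowhere-vanishing basic $2n$-form, and hence $\omega^n \wedge \chi_\mathcal F$ is a nowhere-vanishing top form on $M$. Orienting $M$ accordingly, $\int_M \omega^n \wedge \chi_\mathcal F > 0$.

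Next, suppose for contradiction that $\omega^n = d_B\eta$ for some $\eta \in \Omega_B^{2n-1}(\mathcal F)$. Then, using $d\chi_\mathcal F = \varphi_0$, I compute
\begin{equation*}
d(\eta \wedge \chi_\mathcal F) = \omega^n \wedge \chi_\mathcal F + \eta \wedge \varphi_0.
\end{equation*}
The key observation is that $\eta \wedge \varphi_0 = 0$ by a leg count: at each point $\eta$, being basic of degree $2n-1$, contributes $2n-1$ legs in $Q^*$, while $\varphi_0$ contributes at least two such legs, so the wedge would carry more than $\dim Q = 2n$ legs in $Q^*$ and must vanish by antisymmetry. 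Stokes' theorem then forces $\int_M \omega^n \wedge \chi_\mathcal F = 0$, contradicting the previous step; hence $[\omega^n] \ne 0$.

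Finally, for $0 \le r \le n$, if $\omega^r = d_B\alpha$ for some $\alpha \in \Omega_B^{2r-1}(\mathcal F)$, then $d_B\omega = 0$ gives $\omega^n = d_B(\omega^{n-r}\wedge\alpha)$, contradicting $[\omega^n] \ne 0$. The delicate point is the vanishing of $\eta \wedge \varphi_0$ by leg counting: if $\kappa$ were nonzero, Rummler's formula would contribute an additional term $-\eta \wedge \kappa \wedge \chi_\mathcal F$, and since $\kappa \wedge \chi_\mathcal F$ has only one leg in $Q^*$, this term would not vanish on dimensional grounds. This is precisely why tautness is indispensable for the argument.
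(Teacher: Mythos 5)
Your proof is correct and takes essentially the same approach as the paper: fix a taut metric with $\kappa=0$, use Rummler's formula $d\chi_{\mathcal F}=\varphi_0$, kill the error term by counting normal degree (it would exceed $2n$), and conclude with Stokes' theorem; the paper merely runs this directly for $\omega^r$ by wedging with $\omega^{n-r}\wedge\chi_{\mathcal F}$ instead of first reducing to the top power $\omega^n$. The only slip is a harmless sign: $d(\eta\wedge\chi_{\mathcal F})=\omega^n\wedge\chi_{\mathcal F}-\eta\wedge\varphi_0$ since $\eta$ has odd degree, which changes nothing because $\eta\wedge\varphi_0=0$ anyway.
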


\begin{proof}  Assume that $H_B^{2r}(\mathcal F)=0$ for some  $r$. Then  
$\omega^{r}=d\beta$ for some basic $(2r-1)$-form $\beta$. Since $\mathcal F$ is taut, we choose the metric such that $\kappa=0$. So we get
$d\chi_{\mathcal F} = \varphi_0$ and the normal degree of $\beta\wedge\omega^{n-r}\wedge \varphi_0$ is $2n+1$, so it is zero. Hence
\begin{align*}
0&\ne \omega^n \wedge\chi_{\mathcal F}
=d\beta\wedge \omega^{n-r}\wedge\chi_{\mathcal F}
=d(\beta\wedge \omega^{n-r}\wedge\chi_{\mathcal F}) +\beta\wedge\omega^{n-r}\wedge d\chi_{\mathcal F}\\
&=d(\beta\wedge \omega^{n-r}\wedge\chi_{\mathcal F}) +\beta\wedge\omega^{n-r}\wedge \varphi_0
=d(\beta\wedge \omega^{n-r}\wedge\chi_{\mathcal F})\;.
\end{align*}
Thus, by the Stokes' theorem, we get the contradiction
\[
0\ne \int_M \omega^n\wedge\chi_{\mathcal F} =\int_M d(\beta\wedge\omega^{n-r}\wedge\chi_{\mathcal F}) =0\;.\qedhere
\]
\end{proof}

From \Cref{ex: contact manifold,ex: almost cosymplectic manifold}, we get  the following.

\begin{cor} 
Let $(\mathcal F,\omega)$ be a contact or cosymplectic flow on a closed  manifold $M^{2n+1}$. Then $H_B^{2r}(\mathcal F)\ne 0$  for all $r=1,\dots,n$.
\end{cor}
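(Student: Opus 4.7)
The plan is essentially to verify that the hypotheses of \Cref{p: 2.3} are satisfied in both cases and then invoke the proposition directly. In fact the work has already been done in \Cref{ex: contact manifold,ex: almost cosymplectic manifold}.

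First, I would recall from \Cref{ex: contact manifold} that a contact flow $(\mathcal F_\alpha,d\alpha)$ is a transversely symplectic foliation of codimension $2n$ on $M^{2n+1}$, and that it is geodesible: with respect to the metric making $\{\xi\}\cup(\text{any orthonormal frame of }\ker\alpha)$ orthonormal, one has $\chi_\mathcal F=\alpha$ and $\kappa=i(\xi)d\chi_\mathcal F=0$. Thus the contact flow is taut. Similarly, from \Cref{ex: almost cosymplectic manifold}, the cosymplectic flow $(\mathcal F_\eta,\Phi)$ is transversely symplectic of codimension $2n$ on $M^{2n+1}$, with $\chi_\mathcal F=\eta$ and, since $d\eta=0$, mean curvature form $\kappa=i(\xi)d\chi_\mathcal F=0$; hence $\mathcal F_\eta$ is taut as well.

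Next, I would note that in both cases the closedness and orientability assumptions of \Cref{p: 2.3} are automatic: $M$ is closed by hypothesis, and the existence of the volume form $\chi_\mathcal F\wedge\omega^n$ (namely $\alpha\wedge(d\alpha)^n$ in the contact case and $\eta\wedge\Phi^n$ in the cosymplectic case) shows that $M$ is orientable. The flow itself is oriented by the Reeb vector field $\xi$.

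Having verified all the hypotheses, \Cref{p: 2.3} applies and yields $H_B^{2r}(\mathcal F)\neq 0$ for every $r$ in the admissible range. Since the codimension is $2n$, the top basic degree is $2n$, so this gives nontriviality for $r=1,\ldots,n$ as claimed. Because the argument is a direct appeal to \Cref{p: 2.3}, I do not anticipate any serious obstacle; the only point that needs mentioning is that the tautness claim for both flows follows simply from $d\chi_\mathcal F$ having no component involving only leafwise directions (in the contact case from $i(\xi)d\alpha=0$, and in the cosymplectic case from $d\eta=0$).
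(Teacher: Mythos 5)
Your proposal is correct and is exactly the argument the paper intends: the corollary is stated immediately after \Cref{p: 2.3} with the remark that it follows from \Cref{ex: contact manifold,ex: almost cosymplectic manifold}, i.e., both flows are geodesible (hence taut) with $\kappa=0$, so \Cref{p: 2.3} applies directly. Your additional checks of orientability and closedness are sensible but routine, and nothing further is needed.
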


\begin{rem}  
In contrast to an ordinary symplectic manifold,  the condition $H_B^{2r}(\mathcal F)\ne 0$ is not necessary and sufficient  for the existence of a transversely symplectic structure on a foliation. In fact, when $\mathcal F$ is Riemannian and $M$ closed, since $\mathcal F$ is transversely oriented, it is nontaut if and only if $H^{2n}_B(\mathcal F)=0$ \cite{Al}. For example,  consider the hyperbolic torus $T_A^3=T^2\times \mathbb R/\sim$, where $A\in SL(2,\mathbb Z)$ and $(m,t)\sim (A(m),t+1)$ for any $m\in T^2$ and $t\in\mathbb R$.  Then $T_A^3$ has  a transversely symplectic foliation $\mathcal F$ of codimension 2 such that  $\mathcal F$ is nontaut and $H_B^2(\mathcal F)=0$ \cite[Example~9.1]{JK}.
\end{rem}

\section{Transversely symplectic harmonic forms}
Let $(\mathcal F,\omega)$ be a transversely symplectic foliation.
Let $\flat: Q\to Q^*$ be defined by $\flat(X) = i(X)\omega$.
Since $\omega$  plays a role of  a symplectic structure on the distribution $Q$, the map $\flat$ is an isomorphism.
It is trivial that $\flat(v_a) = w_a^*$ and $\flat (w_a) =- v_a^*$.
Let $\sharp=\flat^{-1}$.  For any $\phi\in\Gamma Q^*$,
\begin{align}\label{3-1}
\phi = i(\phi^\sharp)\omega\;,
\end{align}
where  $\phi^\sharp:=\sharp(\phi)$.  From~\eqref{3-1}, we  get
\begin{align}\label{3-2}
\omega(\phi^\sharp,\psi^\sharp) = i(\psi^\sharp)\phi
\end{align}
for all $\phi,\psi\in\Gamma Q^*$. Moreover  $\phi$ and $\phi^\sharp$ can be locally expressed as
\[
\phi = \sum_{a=1}^n \{ \phi(w_a)\flat(v_a)-\phi(v_a)\flat(w_a)\}\;,\quad \phi^\sharp =\sum_{a=1}^n\{\phi(w_a)v_a -\phi(v_a)w_a\}\;.
\]
The map $\flat$ can be extended to an isomorphism $\flat:\Gamma\Lambda^r Q\to\Gamma\Lambda^r Q^*$, defined by 
\begin{equation}\label{3-4}
\flat(X_1\wedge \cdots\wedge X_r) := \flat(X_1)\wedge\cdots\wedge \flat(X_r)\;,
\end{equation}
where $X_i \in\Gamma Q$ ($i=1,\dots,r$). Similarly, $\sharp$ is extended to an isomorphism $\Gamma\Lambda^r Q^*\to\Gamma\Lambda^r Q$.  Now let 
\[
\omega(\phi,\psi) =\det\big(\omega(\phi_i,\psi_j)\big)_{i,j=1,\dots,r}
\]
 for all   $r$-forms $\phi=\phi_1\wedge\cdots\wedge\phi_r$ and $\psi=\psi_1\wedge\cdots\wedge\psi_r$ with  $\phi_j, \psi_j \in\Gamma Q^*$  $(j=1,\dots,r)$, 
where $\omega(\phi_i,\psi_j) = \omega(\phi_i^\sharp,\psi_j^\sharp)$.  Then 
\begin{align} \label{3-6}
\omega(\phi,\psi) = (-1)^r \omega(\psi,\phi)\;,\quad  i(\phi^\sharp)\psi = \omega(\psi,\phi)\;,
\end{align}
where $i(\phi^\sharp) = i(\phi_r^\sharp)\cdots i(\phi_1^\sharp)$.
The space of  basic vector fields is
\[
\mathfrak X_B (\mathcal F)=\{\,X \in \Gamma Q\ |\ [X,V] \in \Gamma T\mathcal F, \ \forall V\in  \Gamma T\mathcal F\,\}\;.
\]

\begin{lem} \label{l: 3-1}
Let $(\mathcal F,\omega)$ be transversely symplectic foliation on a smooth manifold $M$. Then $\flat$ defines an isomorphism $\mathfrak X_B(\mathcal F) \cong \Omega_B^1(\mathcal F)$.
\end{lem}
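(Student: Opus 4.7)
The plan is to exploit the fact that $\flat\colon Q\to Q^*$ has already been shown to be a fiberwise isomorphism; what remains is to verify that it restricts to a bijection between basic sections on each side. So I would split the argument into two directions.

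First, I would show that $X\in\mathfrak X_B(\mathcal F)$ implies $\flat(X)=i(X)\omega\in\Omega_B^1(\mathcal F)$. For any $V\in\Gamma T\mathcal F$, I would compute $i(V)i(X)\omega=-i(X)i(V)\omega=0$, using that $\omega$ is basic and hence $i(V)\omega=0$. For the Lie derivative condition, I would apply the standard commutation $\theta(V)\circ i(X)=i(X)\circ\theta(V)+i([V,X])$, together with $\theta(V)\omega=0$ (since $\omega$ is basic), to obtain
\[
\theta(V)\flat(X)=i([V,X])\omega.
\]
Because $X$ is basic, $[V,X]\in\Gamma T\mathcal F$, so the right-hand side vanishes; thus $\flat(X)$ is basic.

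For the converse, I would take $\phi\in\Omega_B^1(\mathcal F)$, set $X=\phi^\sharp\in\Gamma Q$, and show $[X,V]\in\Gamma T\mathcal F$ for every $V\in\Gamma T\mathcal F$. The same identity $\theta(V)\flat(X)=i([V,X])\omega$ gives $i([V,X])\omega=0$ from the hypothesis $\theta(V)\phi=0$. Writing $[V,X]=Y_{\mathcal F}+Y_Q$ with respect to $TM=T\mathcal F\oplus Q$, and using $i(Y_{\mathcal F})\omega=0$, this reduces to $i(Y_Q)\omega=\flat(Y_Q)=0$; since $\flat$ is a fiberwise isomorphism on $Q$, we get $Y_Q=0$, i.e.\ $[X,V]\in\Gamma T\mathcal F$. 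Hence $X\in\mathfrak X_B(\mathcal F)$.

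Injectivity of the restricted map is immediate from the fiberwise injectivity of $\flat$, so combined with the two inclusions above, $\flat\colon\mathfrak X_B(\mathcal F)\to\Omega_B^1(\mathcal F)$ is an isomorphism. I do not anticipate a real obstacle: the only delicate point is carefully handling the $T\mathcal F$ versus $Q$ decomposition of $[V,X]$ when checking the converse, but this is routine once one recalls that $\omega$ kills $T\mathcal F$ and that $\flat$ is injective on $Q$.
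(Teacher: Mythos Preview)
Your proposal is correct and follows essentially the same approach as the paper. The only cosmetic differences are that the paper verifies $i(E)d\flat(X)=0$ (using $d\omega=0$ to rewrite $di(X)\omega=\theta(X)\omega$) rather than computing $\theta(V)\flat(X)$ directly via $\theta(V)\omega=0$, and in the converse the paper concludes $[X,E]\in\Gamma T\mathcal F$ immediately from the defining relation $T\mathcal F=\{X\in TM\mid i(X)\omega=0\}$ rather than passing through the $T\mathcal F\oplus Q$ decomposition; both routes are equivalent.
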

\begin{proof}  Let $X\in \mathfrak X_B(\mathcal F)$. For any vector field $E\in\Gamma T\mathcal F$, 
\begin{equation}\label{3-7}
i(E)\flat(X) = i(E) i(X)\omega =-i(X)i(E)\omega=0\;.
\end{equation}
Note that $\theta(Y) = di(Y) + i(Y)d$ and $[\theta(Y),i(Z)] = i([Y,Z])$ for any $Y$ and $Z$. Hence
\begin{equation}\label{3-8}
i(E) d(\flat(X)) = i(E)d i(X)\omega = i(E)\theta(X)\omega = \theta(X) i(E)\omega - i([X,E])\omega =0\;.
\end{equation}
The last equality in~\eqref{3-8} holds because $X\in\mathfrak X_B(\mathcal F)$. By ~\eqref{3-7} and~\eqref{3-8}, $\flat(X)\in\Omega_B^1(\mathcal F)$.

Conversely, let $\phi\in\Omega_B^1 (\mathcal F)$.  Then there exists $X\in Q$ such that $\flat(X)=\phi$, i.e., $\phi = i(X)\omega$.  For $E\in\Gamma T\mathcal F$, since $i(E)\omega=0$ and $d\phi$ is basic, we have
\begin{align*}
i([X,E]) \omega &=\theta(X) i(E)\omega -i(E)\theta(X)\omega
= -i(E)\theta(X)\omega\\
&=-i(E) di(X)\omega
=-i(E) d\phi 
=0\;;
\end{align*}
that is, $[X,E]\in\Gamma T\mathcal F$. Hence $X\in \mathfrak X_B(\mathcal F)$.
\end{proof}
From \Cref{l: 3-1} and~\eqref{3-4}, $\flat$ can be naturally extended to an isomorphism $\flat : \mathfrak X_B^r(\mathcal F) \to \Omega_B^r (\mathcal F)$,
where $\mathfrak X_B^r(\mathcal F)$ is the space of all  foliated skew symmetric $r$-vector fields.

Now, let $T^*:\Gamma\Lambda Q^* \to \Gamma\Lambda Q^*$ denote the \emph{symplectic adjoint operator} of any  operator $T:\Gamma\Lambda Q^*\to \Gamma\Lambda Q^*$, which is given by
\[
\omega(T\phi,\psi) = \omega(\phi,T^*\psi)
\]
for any $\phi,\psi\in \Lambda Q^*$.  The \emph{transversal symplectic Hodge star operator} $\bar\star :\Lambda^r Q^* \to \Lambda^{2n-r}Q^*$   is defined by the formula
\[
\phi\wedge\bar\star\,\psi = \omega(\phi,\psi)\nu
\]
for any $\phi,\psi \in \Gamma\Lambda^r Q^*$,
where $\nu=\omega ^{n}/n!$ is the transversal volume form of $\mathcal{F}$.

\begin{lem}\label{l: 3-2}
For any $X\in\Gamma Q$, $\alpha\in\Gamma Q^*$ and $\phi\in\Gamma \Lambda^r Q^*$,
\[
(1)\ \bar\star\, \phi = i(\phi^\sharp)\nu\;,  \quad  (2)\  \bar\star\,\epsilon(X^{\flat}) \phi = (-1)^r i(X) \,\bar\star\, \phi\;, \quad
   (3)\   \epsilon(\alpha)^*= - i(\alpha^\sharp)\;,\quad  (4)\ (\bar\star)^2= \operatorname{id}\;,
\]
where $\epsilon(\alpha)=\alpha\wedge$ is the exterior product. 
\end{lem}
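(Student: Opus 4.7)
The plan is to establish (1)--(4) in the order (1), (2), (4), (3), with each successive identity following from the previous by short manipulations.

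For (1), observe that $\bar\star\phi$ is characterized uniquely by $\psi\wedge\bar\star\phi = \omega(\psi,\phi)\nu$ for every $r$-form $\psi$, so it suffices to verify that $i(\phi^\sharp)\nu$ enjoys the same property. For $r=1$, apply the Leibniz rule for $i(X)$ to the vanishing top-form $\psi\wedge\nu$ to get the elementary identity $\psi\wedge i(X)\nu = \psi(X)\nu$; then
\[
\psi\wedge i(\phi^\sharp)\nu = \psi(\phi^\sharp)\,\nu = i(\phi^\sharp)\psi\,\nu = \omega(\psi,\phi)\,\nu
\]
by~\eqref{3-2} and~\eqref{3-6}. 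For $r\geq 2$, I write $\phi = \phi_1\wedge\cdots\wedge\phi_r$, unpack the reverse-order convention $i(\phi^\sharp) = i(\phi_r^\sharp)\cdots i(\phi_1^\sharp)$, and iterate the Leibniz rule; on the other side, $\omega(\psi,\phi) = \det\bigl(\omega(\psi_i,\phi_j)\bigr)$ is expanded along a row, and the two sides are matched by induction on $r$.

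Given (1), assertion (2) is essentially formal:
\[
\bar\star(X^\flat\wedge\phi) = i\bigl(X\wedge\phi^\sharp\bigr)\nu = i(\phi^\sharp)\,i(X)\,\nu = (-1)^r\,i(X)\,\bar\star\phi,
\]
where the second equality uses the multivector convention (which places $i(X)$ innermost) and the third uses that $i(X)$ anticommutes with each of the $r$ factors of $i(\phi^\sharp)$. For (4), I work in a local symplectic frame and expand any $r$-form on the induced basis of monomials $v_{a_1}^*\wedge\cdots\wedge v_{a_j}^*\wedge w_{b_1}^*\wedge\cdots\wedge w_{b_k}^*$ ($j+k=r$). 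Combining (1) with $v_a^{*\,\sharp} = -w_a$, $w_a^{*\,\sharp} = v_a$ and $\nu = v_1^*\wedge w_1^*\wedge\cdots\wedge v_n^*\wedge w_n^*$, the action of $\bar\star$ on each basis monomial reduces to an explicit index/sign bookkeeping, and squaring returns the original monomial with sign $+1$.

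Finally, for (3), apply $\bar\star$ to both sides of (2) and invoke (4) to obtain $X^\flat\wedge\phi = (-1)^r\bar\star\, i(X)\,\bar\star\phi$ for every $r$-form $\phi$. Substituting $\bar\star\psi$ for $\phi$ (with $\psi$ of degree $r+1$) and using (4) once more yields $\alpha\wedge\bar\star\psi = (-1)^{r+1}\bar\star\, i(\alpha^\sharp)\,\psi$. Consequently,
\begin{align*}
\omega(\alpha\wedge\phi,\psi)\,\nu &= (\alpha\wedge\phi)\wedge\bar\star\psi = (-1)^r\,\phi\wedge(\alpha\wedge\bar\star\psi)\\
&= -\phi\wedge\bar\star\, i(\alpha^\sharp)\psi = \omega\bigl(\phi,-i(\alpha^\sharp)\psi\bigr)\,\nu,
\end{align*}
from which $\epsilon(\alpha)^* = -i(\alpha^\sharp)$. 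The only substantive technicality throughout is the sign bookkeeping, namely reconciling the reverse-order convention in $i(\phi^\sharp)$ with the anticommutativity of interior products in (2), and confirming that the symplectic-basis calculation of (4) produces $+\operatorname{id}$ rather than $-\operatorname{id}$.
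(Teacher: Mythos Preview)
Your argument is correct and, for parts (1), (2), (3), essentially identical to the paper's---including the use of (4) inside the proof of (3), which is why your reordering $(1),(2),(4),(3)$ is in fact the logically correct order (the paper presents (3) before (4) but tacitly uses $(\bar\star)^2=\operatorname{id}$ in the line $\phi\wedge\epsilon(\alpha)\bar\star\psi=\phi\wedge\bar\star(\bar\star\epsilon(\alpha)\bar\star\psi)$).

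The one substantive difference is in (4). You propose an explicit computation on symplectic basis monomials; this works, but the paper gives a short coordinate-free argument that you may prefer: for $\phi\in\Gamma\Lambda^rQ^*$ and $\psi\in\Gamma\Lambda^{2n-r}Q^*$, one has $\phi\wedge\psi=\omega(\phi\wedge\psi,\nu)\nu$, and then, using (1) and the defining relation for $\bar\star$,
\[
\phi\wedge(\bar\star)^2\psi=\omega(\phi,\bar\star\psi)\nu=\omega(\phi,i(\psi^\sharp)\nu)\nu=(-1)^r\omega(\psi\wedge\phi,\nu)\nu=(-1)^r\psi\wedge\phi=\phi\wedge\psi,
\]
which gives $(\bar\star)^2=\operatorname{id}$ without any frame bookkeeping. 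Either route is fine; the paper's avoids the sign-checking you flag as the ``substantive technicality.''
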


\begin{proof}
(1) This equality is proved by induction on $r$.   For $\phi,\psi\in\Gamma Q^*$, we have
\[
\phi\wedge i(\psi^\sharp)\nu= - i(\psi^\sharp)(\phi\wedge \nu) + (i(\psi^\sharp)\phi)\nu= (i(\psi^\sharp)\phi)\nu=\omega(\phi,\psi)\nu= \phi\wedge\bar\star\,\psi\;.
\]
Assume that  it holds for $r-1$.  Let $\psi = \eta\wedge\beta$ for $\eta\in\Gamma\Lambda^{r-1}Q^*$ and $\beta\in \Gamma Q^*$. Then
\begin{align*}
\phi\wedge i(\psi^\sharp)\nu
&= (-1)^r i(\beta^\sharp)\big(\phi\wedge i(\eta^\sharp)\nu\big) +(-1)^{r+1} i(\beta^\sharp)\phi \wedge i(\eta^\sharp)\nu\\
&=(-1)^{r+1}i(\beta^\sharp)\phi \wedge i(\eta^\sharp)\nu
=(-1)^{r+1} \big(i(\eta^\sharp)i(\beta^\sharp)\phi\big)\nu \\
&= (-1)^{r+1} \big(i(\beta^\sharp\wedge\eta^\sharp)\phi\big)\nu
=\big(i(\eta^\sharp\wedge\beta^\sharp)\phi\big)\nu\\
&=\big(i(\psi^\sharp)\phi\big)\nu 
=\omega(\phi,\psi)\nu
=\phi\wedge\bar\star\,\psi\;.
\end{align*}
Here, $\phi\wedge i(\eta^\sharp)\nu$ is zero because it is of degree $q+1$.  The third equality holds because $i(\beta^\sharp)\phi $ is of degree $r-1$.

 (2)  This equality follows from (1).  
 
  (3) For any $\phi\in\Gamma\Lambda^{r-1}Q^*$ and $\psi\in\Gamma\Lambda^r  Q^*$,  we have 
\begin{align*}
\omega(\epsilon(\alpha)\phi,\psi)\nu&=\epsilon(\alpha)\phi\wedge\bar\star\,\psi
=(-1)^{r-1}\phi\wedge\epsilon(\alpha)\,\bar\star\,\psi
=(-1)^{r-1}\phi\wedge\bar\star\,(\bar\star\,\epsilon(\alpha)\,\bar\star\,\psi)\\
&=\omega(\phi,(-1)^{r-1}\,\bar\star\,\epsilon(\alpha)\,\bar\star\,\psi)\nu
=-\omega(\phi, i(\alpha^\sharp)\psi)\nu\;,
\end{align*}
which proves (3).  

 (4) Note that,  for any $\phi\in\Gamma \Lambda^r Q^*$ and $\psi\in\Gamma\Lambda^{q-r}Q^*$,
\[
\phi\wedge\psi = \omega(\phi\wedge\psi,\nu)\nu\;.
\]
Since $q=2n$, we obtain
\[
\phi\wedge (\bar\star)^2 \psi = \omega(\phi,\bar\star\,\psi)\nu
=\omega(\phi, i(\psi^\sharp)\nu)\nu=(-1)^r \omega(\psi\wedge\phi,\nu)\nu
=(-1)^r \psi\wedge\phi = \phi\wedge\psi\;.
\]
Hence $(\bar\star)^2= \operatorname{id}$. 
\end{proof}

\begin{prop} 
Let $(\mathcal F,\omega)$ be  transversely symplectic foliation on a smooth manifold $M$.  Then   $\bar\star$ induces a homomorphism $\bar\star:\Omega_B^r(\mathcal F)\to \Omega_B^{2n-r}(\mathcal F)$.
\end{prop}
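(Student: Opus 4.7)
The plan is to verify the two defining properties of basic forms — vanishing under $i(X)$ and $\theta(X)$ for every $X\in\Gamma T\mathcal F$ — for $\bar\star\phi$ whenever $\phi\in\Omega_B^r(\mathcal F)$. The two ingredients I expect to use are \Cref{l: 3-2}(1), which rewrites $\bar\star\phi = i(\phi^\sharp)\nu$ with $\nu = \omega^n/n!$, together with two basicness facts: (a) $\nu$ is basic, since $\omega$ is; and (b) $\phi^\sharp\in\mathfrak X_B^r(\mathcal F)$, by the extension of \Cref{l: 3-1} recorded just before the statement.

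The interior-product condition I expect to dispatch in one line: for $X\in\Gamma T\mathcal F$, anticommutation of interior products gives
\[
i(X)\bar\star\phi = i(X)i(\phi^\sharp)\nu = (-1)^r i(\phi^\sharp)i(X)\nu = 0,
\]
since $\nu$ is basic.

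For the Lie-derivative condition, I plan to work locally, writing $\phi^\sharp$ as a basic-function combination of wedges $Y_{i_1}\wedge\cdots\wedge Y_{i_r}$ of basic vector fields $Y_{i_j}\in\mathfrak X_B(\mathcal F)$. Then, using the graded commutator $[\theta(X),i(Y)] = i([X,Y])$ iteratively to push $\theta(X)$ past each $i(Y_{i_j})$, every resulting summand is of one of two types: the leading term $i(Y_{i_r})\cdots i(Y_{i_1})\theta(X)\nu$, which vanishes by basicness of $\nu$; or a term containing $i([X,Y_{i_j}])$ for some $j$, where $[X,Y_{i_j}]\in\Gamma T\mathcal F$ because $Y_{i_j}$ is basic, and after anticommuting past the remaining interior products this also dies on $\nu$. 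The basic-function coefficients contribute $Xf=0$ and pose no trouble.

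The principal delicate point is really just bookkeeping: legitimizing the local decomposition of $\phi^\sharp$ via the extension of \Cref{l: 3-1}, and tracking signs when anticommuting the several interior products against $i([X,Y_{i_j}])$. There is no conceptual obstacle; the entire argument reduces to the basicness of $\nu$ and of $\phi^\sharp$, with the identity $\bar\star\phi = i(\phi^\sharp)\nu$ as the bridge.
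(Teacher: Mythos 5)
Your proposal is correct and follows essentially the same route as the paper: both reduce everything to the identity $\bar\star\phi=i(\phi^\sharp)\nu$ from \Cref{l: 3-2}(1), kill the interior-product condition by anticommuting $i(X)$ past $i(\phi^\sharp)$ onto the basic form $\nu$, and handle the Lie-derivative condition by commuting $\theta(X)$ through the interior product. The only difference is presentational: the paper invokes the multivector identity $[\theta(X),i(K)]=i(\theta(X)K)$ in one stroke, whereas you unfold it in a local basic frame — which, if anything, makes more explicit why the bracket terms $i([X,Y_{i_j}])$ annihilate $\nu$.
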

\begin{proof} Let $\phi\in\Omega_B^r(\mathcal F)$.  Since $\nu$ is basic,  by  \Cref{l: 3-2}~(1),  it is trivial that   $i(X) \,\bar\star\,\phi =0$ for any $X\in \Gamma T\mathcal F$.  On the other hand,   for any $X\in \Gamma T\mathcal F$ and $K\in \mathfrak X_B^r(\mathcal F)$,  $[\theta(X), i(K)] = i(\theta(X)K) $ and $\theta(X)K \in \mathfrak X_B^r (\mathcal F)$.  Since $\nu$ is basic, for any $X\in\Gamma T\mathcal F$,
\[
\theta(X)\,\bar\star\,\phi = \theta(X)i(\phi^\sharp)\nu 
=i(\phi^\sharp)\theta(X)\nu + i(\theta(X) \phi^\sharp)\nu
=0\;.
\]
Hence $\bar\star\, \phi$ is basic.  
\end{proof}

Let $d_B:=d|_{\Omega_B^r(\mathcal F)}$, and  define $\delta_B$ on $\Omega_B^r(\mathcal F)$ by 
\[
\delta_B \phi = (-1)^r \,\bar\star\, (d_B -\kappa\wedge)\,\bar\star\, \phi\;.
\]
If $\mathcal F$ is isoparametric, i.e., $\kappa\in\Omega_B^1(\mathcal F)$, then $\delta_B$ preserves the basic forms.

\begin{prop}\label{p: 3-4}
Let $(\mathcal F,\omega)$ be a transversely symplectic foliation on a closed manifold $M$. If $\mathcal F$ is isoparametric,  then 
\[
\int_M\omega(d_B \phi,\psi)\mu_M = \int_M\omega(\phi,\delta_B\psi)\mu_M
\]
 for all basic forms $\phi\in\Omega_B^{r-1}(\mathcal F)$ and $\psi\in\Omega_B^{r}(\mathcal F)$, using the volume form $\mu_M=\nu\wedge\chi_\mathcal F$ on $M$.
\end{prop}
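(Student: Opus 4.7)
The strategy is to write $[\omega(d_B\phi,\psi)-\omega(\phi,\delta_B\psi)]\,\mu_M$ as an exact form on $M$ and then apply Stokes' theorem, which is available because $M$ is closed. The key tools are the pairing formula $\alpha\wedge\bar\star\,\beta=\omega(\alpha,\beta)\,\nu$, the involution $(\bar\star)^2=\operatorname{id}$ from \Cref{l: 3-2}(4), and Rummler's formula \eqref{Rummler formula}. The isoparametric hypothesis is used only to guarantee that $\kappa$ is basic, so that $\delta_B$ preserves $\Omega_B^*(\mathcal F)$ and the entire computation takes place inside $\Gamma\Lambda^* Q^*$.

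First I would strip $\omega$ from both integrands. On the left, $\omega(d_B\phi,\psi)\,\nu = d_B\phi\wedge\bar\star\,\psi$. On the right, the identity $(\bar\star)^2=\operatorname{id}$ together with the definition of $\delta_B$ gives $\bar\star\,\delta_B\psi=(-1)^r(d_B-\kappa\wedge)\bar\star\,\psi$, so that
\[
\omega(\phi,\delta_B\psi)\,\nu = \phi\wedge\bar\star\,\delta_B\psi = (-1)^r\,\phi\wedge(d_B-\kappa\wedge)\,\bar\star\,\psi.
\]
Applying the graded Leibniz rule to $d(\phi\wedge\bar\star\,\psi)$ then cancels the $\phi\wedge d_B\bar\star\,\psi$ contributions and yields
\[
[\omega(d_B\phi,\psi)-\omega(\phi,\delta_B\psi)]\,\nu = d(\phi\wedge\bar\star\,\psi) + (-1)^r\,\phi\wedge\kappa\wedge\bar\star\,\psi.
\]

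Next I would wedge with $\chi_{\mathcal F}$ so that $\nu$ becomes $\mu_M$. Expanding $d(\phi\wedge\bar\star\,\psi\wedge\chi_{\mathcal F})$ and substituting Rummler's formula $d\chi_{\mathcal F}=-\kappa\wedge\chi_{\mathcal F}+\varphi_0$, the piece $d(\phi\wedge\bar\star\,\psi)\wedge\chi_{\mathcal F}$ becomes a genuine exact form on $M$ plus the two correction terms $-\phi\wedge\bar\star\,\psi\wedge\kappa\wedge\chi_{\mathcal F}$ and $\phi\wedge\bar\star\,\psi\wedge\varphi_0$. The second correction vanishes by a bidegree count along the splitting $TM=T\mathcal F\oplus Q$: the basic form $\phi\wedge\bar\star\,\psi$ has transverse degree $2n-1$, whereas $\varphi_0\in F^2\Omega^{p+1}$ carries at least two transverse directions, so the product would require $2n+1$ transverse directions, which is impossible. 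Moving $\kappa$ past $\bar\star\,\psi$ produces the sign $(-1)^{2n-r}=(-1)^r$, after which the first correction cancels exactly against the $(-1)^r\phi\wedge\kappa\wedge\bar\star\,\psi\wedge\chi_{\mathcal F}$ surviving from the previous step, leaving
\[
[\omega(d_B\phi,\psi)-\omega(\phi,\delta_B\psi)]\,\mu_M = d\bigl(\phi\wedge\bar\star\,\psi\wedge\chi_{\mathcal F}\bigr).
\]
Integrating over $M$ and invoking Stokes' theorem finishes the proof.

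The main delicate point is the sign bookkeeping: three conventions (the graded Leibniz rule, the adjoint formula defining $\delta_B$, and the permutation of $\kappa$ past $\bar\star\,\psi$) must conspire so that the two $\kappa$-contributions annihilate one another. Once this cancellation is seen, the computation is routine; it is also this cancellation that makes $\delta_B$ formally adjoint to $d_B$ under the chosen pairing, and its failure in richer settings is what motivates passing to the operator $d_\kappa = d_B-\frac12\kappa\wedge$ and its companion later in the paper.
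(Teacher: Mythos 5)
Your proof is correct and follows essentially the same route as the paper's: both arguments rest on the pairing identity $\alpha\wedge\bar\star\,\beta=\omega(\alpha,\beta)\nu$, the involutivity $(\bar\star)^2=\operatorname{id}$, Rummler's formula together with the transverse-degree count killing the $\varphi_0$ term, and Stokes' theorem on the closed manifold. The only difference is organizational (you isolate the pointwise identity $[\omega(d_B\phi,\psi)-\omega(\phi,\delta_B\psi)]\mu_M=d(\phi\wedge\bar\star\,\psi\wedge\chi_{\mathcal F})$ before integrating, while the paper manipulates the integrals directly), so no changes are needed.
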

\begin{proof} Let  $\phi \in \Omega_B^{r-1}(\mathcal F)$ and $\psi\in\Omega_B^r(\mathcal F)$. Since the normal degree of $\phi\wedge \bar\star\,\psi\wedge\varphi_0$ is $q+1$, it is zero. Hence, by the Stokes' theorem and the Rummler's formula~\eqref{Rummler formula}, we have
 \begin{align*}
\int_M\omega(d_B\phi,\psi)\mu_M&=\int_M d\phi\wedge \bar\star\,\psi\wedge\chi_{\mathcal F}\\
&= \int_M d(\phi\wedge \bar\star\,\psi\wedge \chi_{\mathcal F})  -(-1)^{r-1}\int_M \phi\wedge d\,\bar\star\,\psi\wedge\chi_{\mathcal F} +\int_M \phi\wedge\bar\star\,\psi\wedge d\chi_{\mathcal F}\\
&=(-1)^{r}\int_M \phi\wedge d\,\bar\star\,\psi\wedge\chi_{\mathcal F}- \int_M \phi\wedge\bar\star\,\psi \wedge\kappa\wedge\chi_{\mathcal F} +\int_M \phi\wedge\bar\star\,\psi \wedge\varphi_0\\
&=(-1)^{r}\int_M \phi\wedge (\bar\star)^2 d\,\bar\star\, \psi\wedge\chi_{\mathcal F} +(-1)^{r+1}\int_M \phi\wedge (\bar\star)^2(\kappa\wedge\bar\star\,\psi)\wedge\chi_{\mathcal F}\\
&=(-1)^{r}\int_M \phi\wedge \bar\star\,(\bar\star\, d\bar\star -\bar\star\,\kappa\wedge \bar\star)\psi \wedge \chi_{\mathcal F}\\
&=(-1)^{r}\int_M \omega(\phi, \bar\star\, (d-\kappa\wedge)\,\bar\star\,\psi)\nu\wedge\chi_{\mathcal F}
=\int_M\omega(\phi,\delta_B\psi)\mu_M\;.\qedhere
\end{align*} 
\end{proof}

\begin{rem} 
By ~\eqref{3-6},  it is trivial that
\[
\int_M\omega (\delta_B\phi,\psi)\mu_M =-\int_M\omega(\phi,d_B \psi)\mu_M\;.
\]
\end{rem} 
Now, let $\delta_T= (-1)^{r} \,\bar\star\, d_B\,\bar\star$ on $\Omega_B^r(\mathcal  F)$.
Since $(\bar\star)^2 = \operatorname{id}$, we have the following.

\begin{lem}  
On $\Omega_B^r (\mathcal F)$,  we have
\[
 d_B \bar\star = (-1)^r \,\bar\star\,\delta_T\;,\quad  \bar\star\, d_B = (-1)^r \delta_T\,\bar\star\;.
\]
\end{lem}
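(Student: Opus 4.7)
The plan is to unfold the definition $\delta_T = (-1)^r \bar\star\, d_B\, \bar\star$ on $\Omega_B^r(\mathcal F)$ and appeal to the involution $(\bar\star)^2 = \operatorname{id}$ from \Cref{l: 3-2}(4). Both identities reduce to one-line manipulations, so the work is essentially sign bookkeeping; the only subtlety is that the sign $(-1)^r$ in the definition of $\delta_T$ is governed by the degree of the form it acts on, so when $\delta_T$ is composed with $\bar\star$ the exponent shifts from $r$ to $2n-r$. Because $2n$ is even, these two signs coincide, which is what makes everything collapse cleanly.

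For the first identity $d_B \bar\star = (-1)^r \bar\star\, \delta_T$ on $\Omega_B^r(\mathcal F)$, fix $\phi \in \Omega_B^r(\mathcal F)$. By definition $\delta_T \phi = (-1)^r \bar\star\, d_B\, \bar\star\, \phi$, so
\[
(-1)^r \bar\star\, \delta_T \phi = (-1)^{2r} (\bar\star)^2 d_B\, \bar\star\, \phi = d_B\, \bar\star\, \phi,
\]
using \Cref{l: 3-2}(4) in the last step.

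For the second identity $\bar\star\, d_B = (-1)^r \delta_T\, \bar\star$ on $\Omega_B^r(\mathcal F)$, observe that $\bar\star\, \phi \in \Omega_B^{2n-r}(\mathcal F)$. Applying the definition of $\delta_T$ in degree $2n-r$, and using $(-1)^{2n-r} = (-1)^r$ together with $(\bar\star)^2 = \operatorname{id}$, we get
\[
\delta_T (\bar\star\, \phi) = (-1)^{2n-r} \bar\star\, d_B\, \bar\star\, (\bar\star\, \phi) = (-1)^r \bar\star\, d_B \phi,
\]
and multiplying by $(-1)^r$ gives the desired equality. The main (and only) obstacle is the parity check $(-1)^{2n-r} = (-1)^r$; note that the statement requires no hypothesis on $\kappa$ or on $\mathcal F$ beyond being transversely symplectic, since $\delta_T$ does not involve the mean curvature.
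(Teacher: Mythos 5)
Your proof is correct and follows exactly the route the paper intends: the lemma is stated there as an immediate consequence of $(\bar\star)^2=\operatorname{id}$ with no written proof, and your computation simply makes that explicit, including the one genuine point of care, namely that the degree-dependent sign $(-1)^{2n-r}$ arising when $\delta_T$ acts on $\bar\star\,\phi$ equals $(-1)^r$ because $2n$ is even.
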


\begin{lem}\label{l: 3-7}
If $\mathcal F$  is isoparametric,  i.e., $\kappa\in\Omega_B^1(\mathcal F)$, 
then, on $\Omega_B^*(\mathcal F)$,
\[
(1) \ \delta_B= \delta_T-i(\kappa^\sharp)\;, \quad
(2) \  \delta_T \epsilon(\kappa) =- \bar\star\, d_B i(\kappa^\sharp) \bar\star\;,\quad (3)\  \epsilon(\kappa)\delta_T = -\bar\star\, i(\kappa^\sharp) d_B\bar\star\;.
\]
In addition, if $M$ is closed, then 
\begin{align}\label{3-15}
\ \delta_T i(\kappa^\sharp) + i(\kappa^\sharp)\delta_T =0\;.
\end{align} 
\end{lem}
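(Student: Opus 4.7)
The plan is to derive (1)--(3) symbolically from the definitions of $\delta_B$, $\delta_T$ together with \Cref{l: 3-2}, while (\ref{3-15}) will follow by squaring (1) and using the identity $d_B\kappa=0$ that holds because $M$ is closed.

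For (1), I would start from
\[
\delta_B-\delta_T = -(-1)^r\,\bar\star\,\epsilon(\kappa)\,\bar\star
\]
on $\Omega_B^r(\mathcal F)$, read off directly from the two definitions. Applying \Cref{l: 3-2}(2) with $X=\kappa^\sharp$ to the $(2n-r)$-form $\bar\star\,\phi$ turns $\bar\star\,\epsilon(\kappa)\,\bar\star\,\phi$ into $(-1)^{2n-r}\,i(\kappa^\sharp)(\bar\star)^2\phi$, and by \Cref{l: 3-2}(4) this reduces to $(-1)^r\, i(\kappa^\sharp)\phi$. The two copies of $(-1)^r$ then cancel, yielding $\delta_B=\delta_T-i(\kappa^\sharp)$.

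Parts (2) and (3) are two further applications of exactly the same move: trade $\bar\star\,\epsilon(\kappa)$ for a sign times $i(\kappa^\sharp)\,\bar\star$ (or vice versa) whenever an $\epsilon(\kappa)$ sits between two copies of $\bar\star$. For (2) I would insert $\delta_T = (-1)^{r+1}\,\bar\star\, d_B\,\bar\star$ into $\delta_T\bigl(\epsilon(\kappa)\phi\bigr)$ and rewrite $\bar\star\,\epsilon(\kappa)\phi$ via \Cref{l: 3-2}(2) as $(-1)^r\, i(\kappa^\sharp)\,\bar\star\,\phi$; the combined sign $(-1)^{r+1}(-1)^r=-1$ produces exactly $-\bar\star\, d_B\, i(\kappa^\sharp)\,\bar\star\,\phi$. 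For (3) I would start from $\epsilon(\kappa)\delta_T\phi=(-1)^r\,\kappa\wedge\bar\star\, d_B\,\bar\star\,\phi$ and apply \Cref{l: 3-2}(2),(4) to the $(r-1)$-form $\psi:=\bar\star\, d_B\,\bar\star\,\phi$ to rewrite $\kappa\wedge\psi = (-1)^{r-1}\,\bar\star\, i(\kappa^\sharp)\,\bar\star\,\psi$, so that the signs again multiply to $-1$.

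Finally, for (\ref{3-15}), my plan is to square the identity in (1). Set $D:=d_B-\kappa\wedge$, so that $\delta_B=(-1)^r\,\bar\star\, D\,\bar\star$ on $\Omega_B^r(\mathcal F)$. On a closed manifold, $d_B^2=0$, $\kappa\wedge\kappa=0$, and $d_B\kappa=0$ together force $D^2=0$, hence $\delta_B^2=0$; the same computation with $\kappa$ removed shows $\delta_T^2=0$ unconditionally. Since also $i(\kappa^\sharp)^2=0$, squaring $\delta_B=\delta_T-i(\kappa^\sharp)$ collapses to
\[
0 = \delta_B^2 = \delta_T^2 - \bigl(\delta_T\, i(\kappa^\sharp)+i(\kappa^\sharp)\,\delta_T\bigr) + i(\kappa^\sharp)^2 = -\bigl(\delta_T\, i(\kappa^\sharp)+i(\kappa^\sharp)\,\delta_T\bigr)\;,
\]
which is the claimed anti-commutation. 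The only real difficulty I anticipate is bookkeeping: tracking the degree-dependent signs that \Cref{l: 3-2}(2) introduces when $\epsilon(\kappa)$ and $i(\kappa^\sharp)$ are conjugated by $\bar\star$, which is why I have organized the argument so that every sign comes from a single uniform application of that lemma.
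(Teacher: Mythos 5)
Your derivations of (1)--(3) are correct and are essentially the paper's own proof: in each case the single move is to commute $\epsilon(\kappa)$ past $\bar\star$ via Lemma~\ref{l: 3-2}(2) and cancel signs using $(\bar\star)^2=\operatorname{id}$, and your sign bookkeeping checks out. The algebraic expansion for~\eqref{3-15} is also the paper's: $\delta_T^2=0$, $i(\kappa^\sharp)^2=0$, so squaring (1) gives $\delta_B^2=-\bigl(\delta_T i(\kappa^\sharp)+i(\kappa^\sharp)\delta_T\bigr)$, and everything reduces to knowing $\delta_B^2=0$.

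The gap is in how you justify $\delta_B^2=0$. You deduce it from $d_B\kappa=0$, which you assert ``holds because $M$ is closed'' without proof. In this paper's setting that assertion is precisely Proposition~\ref{p: 3-9}, which is stated \emph{after} this lemma and whose proof runs in the opposite direction: it starts from $0=\delta_B^2=-\bar\star(\tilde d)^2\bar\star$ and concludes $d\kappa=0$. So your chain $d_B\kappa=0\Rightarrow D^2=0\Rightarrow\delta_B^2=0$ is circular relative to the only proof of $d_B\kappa=0$ available here. Nor can you simply import the closedness of the basic mean curvature from \cite{Al}: that theorem is for Riemannian foliations with bundle-like metrics, whereas here the foliation is only transversely symplectic with an arbitrary auxiliary metric making $\kappa$ basic. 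The paper instead obtains $\delta_B^2=0$ from Proposition~\ref{p: 3-4}: $\delta_B$ is the adjoint of $d_B$ for the integrated symplectic pairing, so $\int_M\omega(\phi,\delta_B^2\psi)\mu_M=\pm\int_M\omega(d_B^2\phi,\psi)\mu_M=0$ for all basic $\phi$, and nondegeneracy of the pairing forces $\delta_B^2=0$. Replace your appeal to $d_B\kappa=0$ with this adjointness argument (or supply an independent proof that $d\kappa=0$ in this generality) and the proof is complete.
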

\begin{proof}   
By  \Cref{l: 3-2}~(2), the equality~(1) is trivial. 
 
Let us prove (2) and (3). By \Cref{l: 3-2}~(2), 
\begin{align*}
\delta_T\epsilon(\kappa)\phi&= (-1)^{r+1} \,\bar\star\, d_B \,\bar\star\, (\epsilon(\kappa)\phi)
=-\bar\star\, d_B i(\kappa^\sharp) \,\bar\star\, \phi\;,\\
\epsilon(\kappa)\delta_T\phi &=(-1)^{r}\epsilon(\kappa)\,\bar\star\, d_B\,\bar\star\,\phi
=(-1)^{r} \,\bar\star\,(\bar\star\,\epsilon(\kappa)\,\bar\star\,(d_B\,\bar\star\,\phi))
=- \bar\star\, i(\kappa^\sharp)d_B\,\bar\star\,\phi\;.
\end{align*}
On the other hand, if $M$ is closed, then $\delta_B^2=0$  by \Cref{p: 3-4}.   Hence 
\[
\delta_T i(\kappa^\sharp) + i(\kappa^\sharp)\delta_T=-\delta_B^2=0\;,
\]
which proves~\eqref{3-15}.
\end{proof}

By \Cref{l: 3-7}, we have the following.

\begin{cor}   
If $\mathcal F$  is isoparametric, then
\[
\delta_T \epsilon(\kappa) +\epsilon(\kappa)\delta_T = -\bar\star\, \theta(\kappa^\sharp)\,\bar\star\;.
\]
\end{cor}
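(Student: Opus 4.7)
The plan is to simply add the two identities from \Cref{l: 3-7}~(2) and~(3) and then invoke Cartan's magic formula $\theta(\kappa^\sharp)=d\,i(\kappa^\sharp)+i(\kappa^\sharp)\,d$ in the middle. Concretely, from \Cref{l: 3-7} one obtains
\[
\delta_T\epsilon(\kappa)+\epsilon(\kappa)\delta_T
= -\bar\star\bigl(d_B\, i(\kappa^\sharp)+i(\kappa^\sharp)\,d_B\bigr)\bar\star,
\]
so the problem reduces to showing that on $\Omega_B^*(\mathcal F)$
\[
d_B\, i(\kappa^\sharp)+i(\kappa^\sharp)\,d_B=\theta(\kappa^\sharp).
\]

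For this to be meaningful I first need to justify that $i(\kappa^\sharp)$ preserves basic forms. This is where the isoparametric hypothesis enters: since $\kappa\in\Omega_B^1(\mathcal F)$, \Cref{l: 3-1} yields $\kappa^\sharp\in\mathfrak X_B(\mathcal F)$, i.e.\ $\kappa^\sharp$ is a foliated (basic) vector field. A standard check then shows that for any $X\in\Gamma T\mathcal F$ and any basic $\phi$, both $i(X)\,i(\kappa^\sharp)\phi$ and $\theta(X)\,i(\kappa^\sharp)\phi$ vanish (the second using the commutator identity $[\theta(X),i(\kappa^\sharp)]=i([X,\kappa^\sharp])$ and $[X,\kappa^\sharp]\in\Gamma T\mathcal F$). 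Hence $i(\kappa^\sharp)\phi$ is basic, and consequently so is $\theta(\kappa^\sharp)\phi$.

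With that in place, Cartan's magic formula applied on $\Omega^*(M)$ restricts to $\Omega_B^*(\mathcal F)$, because $d_B$ is simply the restriction of $d$ to basic forms; this gives the displayed identity above. Substituting back yields
\[
\delta_T\epsilon(\kappa)+\epsilon(\kappa)\delta_T=-\bar\star\,\theta(\kappa^\sharp)\,\bar\star,
\]
as required. There is no serious obstacle here: the only point requiring care is the verification that $\kappa^\sharp$ is basic so that $i(\kappa^\sharp)$ and $\theta(\kappa^\sharp)$ act on $\Omega_B^*(\mathcal F)$ and Cartan's formula can be applied componentwise. Everything else is a one-line combination of the two parts of \Cref{l: 3-7}.
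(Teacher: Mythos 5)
Your proof is correct and matches the paper's intended argument: the paper derives this corollary directly from \Cref{l: 3-7} by adding parts (2) and (3) and applying Cartan's formula $\theta(\kappa^\sharp)=d\,i(\kappa^\sharp)+i(\kappa^\sharp)\,d$, exactly as you do. Your extra verification that $\kappa^\sharp\in\mathfrak X_B(\mathcal F)$ (via \Cref{l: 3-1}) so that $i(\kappa^\sharp)$ and $\theta(\kappa^\sharp)$ preserve basic forms is a worthwhile point of care that the paper leaves implicit.
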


For  later use, we prove the following about the mean curvature form.

\begin{prop}\label{p: 3-9}
If $\mathcal F$ is isoparametric on a closed manifold $M$, then
 $d\kappa=0$.
\end{prop}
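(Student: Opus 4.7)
The plan is to apply the commutation identity~\eqref{3-15} from \Cref{l: 3-7} to the transverse volume form $\nu=\omega^n/n!$, and to reinterpret the resulting equation by means of \Cref{l: 3-2}. The upshot will be $\bar\star\, d_B\kappa = 0$, from which $d\kappa = 0$ follows at once since $\bar\star$ is invertible.

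First I would compute the two summands of~\eqref{3-15} evaluated at $\nu$. Since $(\bar\star)^2=\operatorname{id}$ and $\bar\star 1=\nu$, one has $\bar\star\,\nu=1$, so
\[
\delta_T\nu=(-1)^{2n}\bar\star\, d_B\bar\star\,\nu=\bar\star\, d_B(1)=0,
\]
and therefore the term $i(\kappa^\sharp)\delta_T\nu$ drops out. For the other summand, \Cref{l: 3-2}(1) (applied with $\phi=\kappa$) identifies $i(\kappa^\sharp)\nu=\bar\star\,\kappa$, which is a basic $(2n-1)$-form, and the definition of $\delta_T$ together with $(\bar\star)^2=\operatorname{id}$ then gives
\[
\delta_T\bar\star\,\kappa=(-1)^{2n-1}\bar\star\, d_B\bar\star\,\bar\star\,\kappa=-\bar\star\, d_B\kappa.
\]
Plugging these into~\eqref{3-15} applied to $\nu$ yields $-\bar\star\, d_B\kappa = 0$, and applying $\bar\star$ once more produces $d\kappa = d_B\kappa = 0$.

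The only conceptual step is choosing the right test form: $\nu$ is the natural object on which $\delta_T$ vanishes for degree reasons while $i(\kappa^\sharp)$ lands precisely in $\bar\star\,\Omega_B^1(\mathcal F)$, so that~\eqref{3-15} extracts exactly $d_B\kappa$. No serious obstacle is anticipated; the whole argument is a short algebraic manipulation once \Cref{l: 3-2} and~\eqref{3-15} are in hand. The hypothesis that $M$ is closed enters only through~\eqref{3-15} itself (which ultimately reflects $\delta_B^2=0$).
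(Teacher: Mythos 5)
Your proof is correct and is essentially the paper's own argument in $\bar\star$-dual form: the paper writes $0=\delta_B^2=-\bar\star\,(d-\kappa\wedge)^2\,\bar\star$ and evaluates $(d-\kappa\wedge)^2$ at $1$, whereas you evaluate the equivalent identity~\eqref{3-15} (itself just $-\delta_B^2=0$ unpacked via $\delta_B=\delta_T-i(\kappa^\sharp)$) at $\nu=\bar\star\,1$. Both computations extract exactly $\bar\star\, d_B\kappa=0$, so the two routes coincide up to conjugation by $\bar\star$.
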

\begin{proof}   
Let $\tilde d = d-\kappa\wedge$. Then $0=\delta_B^2 = -\bar\star\, (\tilde d)^2 \,\bar\star$,
and therefore $(\tilde d )^2=0$ and $\tilde d (1) = -\kappa$. Hence 
\[
d\kappa = (\tilde d +\kappa\wedge)\kappa =\tilde d\kappa =- \tilde d (\tilde d 1) =0\;.\qedhere
\]
\end{proof}

\section{Transversal hard Lefschetz theorem}
Let $(\mathcal F,\omega)$ be a transversely symplectic foliation on a smooth manifold $M$.
Now, we define the operator $L:\Gamma\Lambda^r Q^*\to\Gamma \Lambda^{r+2}Q^*$  by $L\phi = \omega\wedge\phi$ for any  form $\phi\in\Gamma\Lambda^rQ^*$. Let $\Lambda=L^*:\Gamma\Lambda^r Q^*\to\Gamma \Lambda^{r-2}Q^*$ be the symplectic adjoint operator of $L$; that is, 
\[
\omega(L\phi,\psi) = \omega(\phi,\Lambda\psi)\;.
\] 

\begin{lem}[Cf.\ \cite{Pa}]  \label{l: 4-1}
Let $(\mathcal F,\omega)$ be a transversely symplectic foliation. Then, on $\Gamma \Lambda^rQ^*$, 
\[ 
\Lambda= \bar\star\, L\,\bar\star=i(\omega^\sharp)\;,\quad  L\,\bar\star = \bar\star\, \Lambda\;.
\]
\end{lem}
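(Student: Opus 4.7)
My plan is to prove the three equalities in the order $\Lambda=\bar\star\,L\,\bar\star$, then $\bar\star\,L\,\bar\star=i(\omega^\sharp)$, and finally $L\,\bar\star=\bar\star\,\Lambda$.

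For the first equality, I would use the defining relation $\phi\wedge\bar\star\,\psi=\omega(\phi,\psi)\nu$ together with $(\bar\star)^2=\operatorname{id}$ (\Cref{l: 3-2}\,(4)). For $\phi\in\Gamma\Lambda^{r-2}Q^*$ and $\psi\in\Gamma\Lambda^r Q^*$, write
\[
\omega(\phi,\bar\star\,L\,\bar\star\,\psi)\,\nu=\phi\wedge\bar\star(\bar\star\,L\,\bar\star\,\psi)=\phi\wedge L\,\bar\star\,\psi=\phi\wedge\omega\wedge\bar\star\,\psi.
\]
Since $\omega$ is a $2$-form, it commutes with any form under $\wedge$, so this equals $\omega\wedge\phi\wedge\bar\star\,\psi=L\phi\wedge\bar\star\,\psi=\omega(L\phi,\psi)\,\nu$. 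By the definition of $\Lambda=L^{*}$, this forces $\Lambda=\bar\star\,L\,\bar\star$.

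For the middle equality, I would work in a local symplectic frame $\{v_a,w_a\}$, so that $\omega=\sum_{a=1}^{n}v_a^{*}\wedge w_a^{*}$ and hence $L=\sum_a\epsilon(v_a^{*})\,\epsilon(w_a^{*})$. The symplectic adjoint is functorial, $(AB)^{*}=B^{*}A^{*}$, which follows directly from $\omega(AB\phi,\psi)=\omega(B\phi,A^{*}\psi)=\omega(\phi,B^{*}A^{*}\psi)$. Combined with \Cref{l: 3-2}\,(3) applied to each $1$-form factor, this gives
\[
\Lambda=L^{*}=\sum_{a}\epsilon(w_a^{*})^{*}\,\epsilon(v_a^{*})^{*}=\sum_{a}i(\sharp(w_a^{*}))\,i(\sharp(v_a^{*})).
\]
The identifications $\flat(v_a)=w_a^{*}$ and $\flat(w_a)=-v_a^{*}$ give $\sharp(w_a^{*})=v_a$ and $\sharp(v_a^{*})=-w_a$, so the right-hand side becomes $\sum_a i(w_a)\,i(v_a)$. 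On the other hand, from the definition~\eqref{3-4} of $\flat$ on multivectors, $\omega^{\sharp}=\sum_a\sharp(v_a^{*})\wedge\sharp(w_a^{*})=\sum_a v_a\wedge w_a$, and the convention $i(\phi^{\sharp})=i(\phi_r^{\sharp})\cdots i(\phi_1^{\sharp})$ of~\eqref{3-6} yields $i(\omega^{\sharp})=\sum_a i(w_a)\,i(v_a)$, matching $L^{*}$ as required.

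The third equality is then automatic: applying $\bar\star$ on the left of $\Lambda=\bar\star\,L\,\bar\star$ and invoking $(\bar\star)^2=\operatorname{id}$ gives $\bar\star\,\Lambda=L\,\bar\star$. The main bookkeeping nuisance is the second step, where one must track the sign coming from $\sharp(v_a^{*})=-w_a$ together with the reversed-order convention for $i(X\wedge Y)=i(Y)\,i(X)$; these two signs conspire to produce the correct identity.
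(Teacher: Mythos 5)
Your proposal is correct, and for the first and third equalities it coincides with the paper's proof: both establish $\Lambda=\bar\star\,L\,\bar\star$ from the defining relation $\phi\wedge\bar\star\,\psi=\omega(\phi,\psi)\nu$, the fact that the $2$-form $\omega$ commutes under $\wedge$, and $(\bar\star)^2=\operatorname{id}$, and both obtain $L\,\bar\star=\bar\star\,\Lambda$ by hitting the first identity with $\bar\star$. Where you genuinely diverge is the middle equality $\Lambda=i(\omega^\sharp)$: the paper stays frame-free and computes $\bar\star\,L\,\bar\star\,\phi=\bar\star(\omega\wedge\bar\star\,\phi)=i\bigl((\bar\star\,\phi)^\sharp\bigr)i(\omega^\sharp)\nu$ using \Cref{l: 3-2}~(1), then commutes $i(\omega^\sharp)$ past the other interior product (legitimate because $\omega^\sharp$ has even degree) and applies $(\bar\star)^2=\operatorname{id}$; you instead decompose $L=\sum_a\epsilon(v_a^*)\epsilon(w_a^*)$ in a local symplectic frame and compute $L^*$ directly from the anti-multiplicativity of the symplectic adjoint together with \Cref{l: 3-2}~(3). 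Your sign bookkeeping checks out: the two minus signs from $\epsilon(\alpha)^*=-i(\alpha^\sharp)$ cancel, and $\sharp(v_a^*)=-w_a$ combines with the reversed-order convention $i(X\wedge Y)=i(Y)i(X)$ to give $\sum_a i(w_a)i(v_a)$ on both sides. The paper's route is shorter and avoids choosing a frame, while yours makes the identification $\Lambda=L^*$ completely explicit and sidesteps the convention-sensitive step $i\bigl((\omega\wedge\bar\star\,\phi)^\sharp\bigr)=i\bigl((\bar\star\,\phi)^\sharp\bigr)i(\omega^\sharp)$; the trade-off is that you must check local well-definedness and track frame-dependent signs, which you do correctly.
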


\begin{proof} 
Let $\phi\in\Lambda^r Q^*$ and $\psi\in\Lambda^{r+2}Q^*$.  Then
\[
\omega(L\phi,\psi)\nu=L\phi\wedge\bar\star\,\psi
=\phi\wedge L\,\bar\star\,\psi
=\phi\wedge \bar\star\,(\bar\star\, L\,\bar\star\,\psi)
= \omega(\phi, \bar\star\, L\,\bar\star\,\psi)\nu\;,
\]
which proves  $\Lambda = \bar\star\, L\,\bar\star$.
By \Cref{l: 3-2},  we have
\[
\bar\star\, L \,\bar\star\,\phi = \bar\star\,(\omega\wedge \bar\star\,\phi)
= i((\bar\star\,\phi)^\sharp)i(\omega^\sharp)\nu
=i(\omega^\sharp)i((\bar\star\,\phi)^\sharp)\nu
= i(\omega^\sharp)(\bar\star)^2\phi
= i(\omega^\sharp)\phi\;.
\]
Finally,  $\bar\star\,\Lambda = (\bar\star)^2 L\,\bar\star = L\,\bar\star$.  
\end{proof}

\begin{rem}  
Since $\bar\star$ preserves the basic forms,  $L$ and $\Lambda$ preserve the basic forms. 
\end{rem}

\begin{lem} \label{l: 4-3}
Let $(\mathcal F,\omega)$ be a transversely symplectic foliation on $M$. Then $[d_B,\Lambda]=\delta_T$ on $\Omega_B^*(\mathcal F)$
\end{lem}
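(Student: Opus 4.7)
The plan is to mimic the classical Koszul--Brylinski argument: show that $D := [d_B, \Lambda] - \delta_T$ is $C^\infty_B(\mathcal F)$-linear on $\Omega_B^*(\mathcal F)$, and then verify $D \equiv 0$ by checking it on constant-coefficient monomials in a local basic Darboux chart.

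First I would prove the purely algebraic identity $[\Lambda, \epsilon(\alpha)] = -i(\alpha^\sharp)$ for any $\alpha \in \Gamma Q^*$. Expanding $\Lambda = \sum_a i(w_a) i(v_a)$ in a symplectic frame and using
\[
i(Y)i(X)(\alpha \wedge \phi) = \alpha(X) i(Y)\phi - \alpha(Y) i(X)\phi + \alpha \wedge i(Y)i(X)\phi,
\]
one gets $[i(Y)i(X), \epsilon(\alpha)] = \alpha(X)i(Y) - \alpha(Y)i(X)$, and summing yields $\sum_a (\alpha(v_a) i(w_a) - \alpha(w_a) i(v_a)) = -i(\alpha^\sharp)$ by the explicit formula for $\alpha^\sharp$ recalled in Section 3.

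Next, for a basic function $f$ and $\phi \in \Omega_B^r(\mathcal F)$, I would verify that both operators obey the same Leibniz-type rule
\[
P(f\phi) = f P\phi + i((df)^\sharp)\phi, \qquad P \in \{[d_B,\Lambda],\, \delta_T\}.
\]
For $P = [d_B, \Lambda]$ this falls out of Leibniz for $d_B$ combined with the algebraic identity above. For $P = \delta_T$, it follows from $\bar\star(f\phi) = f\bar\star\phi$ together with \Cref{l: 3-2}\,(2) and $\bar\star^2 = \operatorname{id}$, which give
\[
\bar\star\,\epsilon(df)\,\bar\star\,\phi = (-1)^{2n-r} i((df)^\sharp)\,\bar\star^2\phi = (-1)^r i((df)^\sharp)\phi,
\]
whence $\delta_T(f\phi) = (-1)^r\bar\star\,\epsilon(df)\,\bar\star\,\phi + f\delta_T\phi = i((df)^\sharp)\phi + f\delta_T\phi$. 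Subtracting the two rules gives $D(f\phi) = fD\phi$, so $D$ is $C^\infty_B(\mathcal F)$-linear.

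Finally, to conclude $D = 0$ I would invoke a foliated Darboux theorem: since $\omega$ is a closed, non-degenerate, basic $2$-form on $Q$, each point of $M$ lies in a foliated chart with basic coordinates $(x^1,\dots,x^n,y^1,\dots,y^n)$ such that $\omega = \sum_a dx^a \wedge dy^a$. In such a chart every basic form is a $C^\infty_B$-linear combination of the constant-coefficient monomials $dx^I \wedge dy^J$. Since $\omega,\omega^\sharp,\nu$ all have constant coefficients, $\bar\star$ preserves the subspace of constant-coefficient forms, while $d_B$ annihilates it. Hence both $[d_B, \Lambda]$ and $\delta_T$ vanish on each $dx^I \wedge dy^J$, so $D$ vanishes on a $C^\infty_B(\mathcal F)$-generating family, and the $C^\infty_B$-linearity from the previous step forces $D \equiv 0$. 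The main obstacle is the careful sign book-keeping in the second step, especially in deriving the Leibniz rule for $\delta_T$ via $\bar\star\,\epsilon(df)\,\bar\star = (-1)^r i((df)^\sharp)$; once this is in hand, the reduction to constant-coefficient monomials is automatic.
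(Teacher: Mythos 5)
Your proposal is correct and is essentially the argument the paper itself invokes: the paper's proof of \Cref{l: 4-3} simply defers to Brylinski's Theorem~2.2.1, whose proof is exactly this reduction (show the difference of the two operators is linear over basic functions via the identity $[\Lambda,\epsilon(\alpha)]=-i(\alpha^\sharp)$ and matching Leibniz rules, then kill it on constant-coefficient monomials in a Darboux chart), transplanted to the foliated setting via the local quotient manifolds on which $\omega$ descends as a symplectic form. The sign bookkeeping in your Leibniz rule for $\delta_T$ is consistent with \Cref{l: 3-2}, so no gap.
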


\begin{proof}  
The proof is similar to the proof of \cite[Theorem~2.2.1]{BR}.
\end{proof}

We introduce the operator $A:\Omega^*_B(\mathcal F)\to \Omega_B^*(\mathcal F)$ defined by
\begin{equation}\label{4-3}
A=\sum_{r=0}^{2n} (n-r)\pi_r\;,
\end{equation}
where $\pi_r:\Omega_B^*(\mathcal F)\to \Omega_B^r (\mathcal F)$ is the natural projection.
Then we have the following  \cite{BR,Ya}.

\begin{lem} \label{l: 4-4}
Let $(\mathcal F,\omega)$ be a transversely symplectic foliation on $M$.  Then, on $\Omega_B(\mathcal F)$:
\begin{enumerate}[{\rm(1)}]

\item $ [\Lambda,L] = A,\  [A,L]=-2L,\  [A,\Lambda]=2\Lambda$;

\item $[L,d_B]=[\Lambda,\delta_T]=0,$   $[L,\epsilon(\kappa)]=[\Lambda,i(\kappa^\sharp)]=0$;

\item $[A,\delta_B]=\delta_B, \  [L,\delta_T]=[A,d_B]=-d_B$; and

\item  $[L, i(X)] = -\epsilon(X^\flat)$ and $[\Lambda,\epsilon(X^\flat)]=-i(X)$  for any $X\in\mathfrak X_B(\mathcal F)$.

\end{enumerate}
\end{lem}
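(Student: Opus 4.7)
The strategy is to group the twelve identities by proof method. The identities in (1) are pointwise symplectic-linear-algebra statements about the $\mathfrak{sl}_2$-action on $\Lambda Q^*$. The first two identities in (2) reduce to $d\omega=0$ and the commutativity of wedge with a $2$-form; the remaining two follow by conjugation with $\bar\star$ or by pointwise symplectic adjunction. The first two identities in (3) are pure degree counting, and $[L,\delta_T]=-d_B$ follows from \Cref{l: 4-3} via the Jacobi identity. Finally, (4) reduces to Cartan's identity together with a single symplectic adjunction.

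For (1), since $A$ acts as multiplication by $n-r$ on $\Omega_B^r(\mathcal F)$ while $L$ and $\Lambda$ shift the degree by $\pm 2$, the identities $[A,L]=-2L$ and $[A,\Lambda]=2\Lambda$ are immediate: for $\phi\in\Omega_B^r(\mathcal F)$, $[A,L]\phi=(n-r-2)L\phi-(n-r)L\phi=-2L\phi$, and similarly for $\Lambda$. The remaining identity $[\Lambda,L]=A$ is the only genuinely symplectic input. I will verify it in a local symplectic coframe $\{v_a^*,w_a^*\}$, using $\Lambda=i(\omega^\sharp)$ from \Cref{l: 4-1} and $\omega=\sum_a v_a^*\wedge w_a^*$; the check on monomials in the $v_a^*,w_a^*$ is the classical Brylinski--Yan symplectic $\mathfrak{sl}_2$-computation.

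For (2), since $d\omega=0$ and $\omega$ has even degree, $d_B(\omega\wedge\phi)=d\omega\wedge\phi+\omega\wedge d_B\phi=\omega\wedge d_B\phi$, so $[L,d_B]=0$; and $[L,\epsilon(\kappa)]=0$ because wedge with a $2$-form commutes with wedge with a $1$-form. Now $\bar\star L\bar\star=\Lambda$ by \Cref{l: 4-1}, $\bar\star d_B\bar\star=\pm\delta_T$ by the definition of $\delta_T$, and $\bar\star^2=\operatorname{id}$ by \Cref{l: 3-2}~(4), so conjugating $[L,d_B]=0$ by $\bar\star$ yields $[\Lambda,\delta_T]=0$. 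Taking the pointwise symplectic adjoint of $[L,\epsilon(\kappa)]=0$ and using $L^*=\Lambda$ together with $\epsilon(\kappa)^*=-i(\kappa^\sharp)$ from \Cref{l: 3-2}~(3) gives $[\Lambda,i(\kappa^\sharp)]=0$. In (3), the identities $[A,d_B]=-d_B$ and $[A,\delta_B]=\delta_B$ are immediate from~\eqref{4-3} and the degree-shift behavior of $d_B$ and $\delta_B$. For $[L,\delta_T]=-d_B$, I apply the Jacobi identity to \Cref{l: 4-3}:
\begin{align*}
[L,\delta_T]=[L,[d_B,\Lambda]]=-[d_B,[\Lambda,L]]-[\Lambda,[L,d_B]]=-[d_B,A]=[A,d_B]=-d_B,
\end{align*}
using $[\Lambda,L]=A$ from (1), $[L,d_B]=0$ from (2), and $[A,d_B]=-d_B$ just proved.

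For (4), Cartan's identity gives $i(X)(\omega\wedge\phi)=(i(X)\omega)\wedge\phi+\omega\wedge i(X)\phi=X^\flat\wedge\phi+L\,i(X)\phi$ because $\omega$ has even degree, so $[L,i(X)]=-\epsilon(X^\flat)$. Taking the pointwise symplectic adjoint and using $L^*=\Lambda$ together with $i(X)^*=\epsilon(X^\flat)$---which follows from $\epsilon(\alpha)^*=-i(\alpha^\sharp)$ in \Cref{l: 3-2}~(3) by applying $*$ once more and tracking the graded involution rule $(T^*)^*=(-1)^{r+s}T$---yields $[\Lambda,\epsilon(X^\flat)]=-i(X)$. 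The only genuinely nontrivial step in the entire proof is the $\mathfrak{sl}_2$-relation $[\Lambda,L]=A$ from (1); every remaining identity is a formal consequence of $d\omega=0$, Cartan's identity, Jacobi, symplectic adjunction, and degree counting, so I do not expect any essential obstacle beyond that standard symplectic-algebra computation.
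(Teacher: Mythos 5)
Your proof is correct and follows essentially the same route as the paper: degree counting for the $A$-commutators, the standard local symplectic-frame computation for $[\Lambda,L]=A$ via \Cref{l: 4-1}, $d\omega=0$ for $[L,d_B]=0$, and \Cref{l: 4-3} combined with the Jacobi identity for $[L,\delta_T]=-d_B$. The only cosmetic difference is that for $[\Lambda,i(\kappa^\sharp)]=0$ and $[\Lambda,\epsilon(X^\flat)]=-i(X)$ you take pointwise symplectic adjoints (correctly tracking the sign rule for the involution), whereas the paper conjugates by $\bar\star$ using \Cref{l: 3-2}; these are two packagings of the same duality, so the arguments coincide in substance.
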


\begin{proof} (1) Using \Cref{l: 4-1},  the proof is easy.  

(2) Trivially, $[L,d_B]=[L,\epsilon(\kappa)]=0$.  By the definition of $\delta_T$ and \Cref{l: 4-1,l: 3-2}, 
\begin{gather*}
\Lambda \delta_T= (-1)^{r} \,\bar\star\, Ld_B \,\bar\star= (-1)^{r}\,\bar\star\, d_B L \,\bar\star=(-1)^{r} \,\bar\star\, d_B \,\bar\star\, \Lambda=\delta_T\Lambda\;,\\
\Lambda i(\kappa^\sharp)= (-1)^{r}\,\bar\star\, L\epsilon(\kappa)\,\bar\star
= (-1)^{r} \,\bar\star\, \epsilon(\kappa)L\,\bar\star
=(-1)^{r} \,\bar\star\,\epsilon(\kappa)\,\bar\star\,\Lambda
= i(\kappa^\sharp)\Lambda\;,
\end{gather*}
on $\Omega_B^r(\mathcal F)$. So $[\Lambda,\delta_T]=[\Lambda,i(\kappa^\sharp)]=0$.   

(3) The proofs of $[A,\delta_B]=\delta_B$  and $[A,d_B]=-d_B$  are trivial by~\eqref{4-3}.  By \Cref{l: 4-3},
\[
[L,\delta_T]= [\Lambda,L]d_B - d_B[\Lambda,L] =[A, d_B]\;.
\]

(4)  For any $X\in\Gamma Q$ and $\phi\in\Omega_B^r(\mathcal F)$, 
\[
i(X)L\phi = i(X)(\omega\wedge\phi) = i(X)\omega\wedge\phi + Li(X)\phi\;,
\]
which proves the first equality.  The second one follows because,  by \Cref{l: 3-2,l: 4-1},  for any $\phi\in \Omega_B^r(\mathcal F)$,
\begin{align*}
[\Lambda,\epsilon(X^\flat)]\phi&= \Lambda \epsilon(X^\flat)\phi-\epsilon(X^\flat)\Lambda\phi\\
&=(-1)^{r}\,\bar\star\, L (\bar\star)^2 i(X) \,\bar\star\,\phi - (-1)^r \,\bar\star\, i(X) (\bar\star)^2 L\,\bar\star\, \phi\\
&=(-1)^{r} \,\bar\star\,[L,i(X)]\,\bar\star\,\phi
=(-1)^{r+1} \,\bar\star\,\epsilon(X^\flat)\,\bar\star\,\phi
= - i(X)\phi\;.\qedhere
\end{align*}  
\end{proof}

From \Cref{l: 4-3}, it is well known that it follows that $d_B\delta_T +\delta_T d_B =0$. Hence
\[
\Delta_B:=d_B\delta_B + \delta_B d_B =-\theta(\kappa^\sharp)\;.
\]
If $\mathcal F$ is minimal, then $d_B\delta_B+\delta_B d_B =0$; that is, $\ker\Delta_B=\Omega_B(\mathcal F)$.  Thus we define the following. 

\begin{defn}
A basic form $\phi$ is said to be a \emph{transversely} (resp., \emph{normally}) \emph{symplectic harmonic form} if $d_B\phi=\delta_B\phi=0$ (resp., $d_B\phi=\delta_T\phi=0$).
\end{defn}

\begin{rem}
\Cref{l: 4-4} implies that  $\{A,L,\Lambda\}$ spans the Lie algebra 
$\operatorname{sl}(2)$. Hence the space $\Omega_B^*(\mathcal F)$ is a $\operatorname{sl}(2)$-module  on which $A$ acts diagonally with only finitely many different eigenvalues.  Hence we have the duality on transversely symplectic harmonic forms \cite{Pa}. 
\end{rem}

Let  $\mathcal H_{SB}^r (\mathcal F)$ (resp., $\mathcal H_{ST}^r(\mathcal F)$)  be the space of all transversely (resp., normally) symplectic harmonic forms on $M$; that is, 
\begin{align*}
\mathcal H_{SB}^r (\mathcal F) &= \{\,\phi\in\Omega_B^r(\mathcal F)\ |\ d_B\phi = \delta_B\phi=0\,\}\;,\\
\mathcal H_{ST}^r(\mathcal F) &=\{\,\phi \in\Omega_B^r (\mathcal F)\ |\  d_B\phi = \delta_T\phi =0\,\}\;.
\end{align*}
If the foliation is minimal, then $\mathcal H_{SB}^*(\mathcal F) = \mathcal H_{ST}^*(\mathcal F)$.

\begin{prop}\label{p: 4-7}
Let $(\mathcal F,\omega)$ be a transversely symplectic foliation.  Then $L^r :\mathcal H_{ST}^{n-r}(\mathcal F) \to \mathcal H_{ST}^{n+r}(\mathcal F)$ is an isomorphism. 
\end{prop}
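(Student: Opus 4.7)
The plan is to show that $\mathcal H_{ST}^*(\mathcal F)=\bigoplus_k \mathcal H_{ST}^k(\mathcal F)$ is an $\mathfrak{sl}(2)$-submodule of $\Omega_B^*(\mathcal F)$ under the action generated by $(A,L,\Lambda)$, and then to invoke the Lefschetz isomorphism from the representation theory of $\mathfrak{sl}(2)$.

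First, I would check that each of $A$, $L$, and $\Lambda$ preserves $\mathcal H_{ST}^*(\mathcal F)$. The operator $A$ acts as the scalar $n-k$ on $\Omega_B^k(\mathcal F)$ and so trivially preserves harmonicity in each degree. For $L$, let $\phi\in\mathcal H_{ST}^*(\mathcal F)$; then $d_B L\phi=Ld_B\phi=0$ by \Cref{l: 4-4}(2), while the commutator $[L,\delta_T]=-d_B$ of \Cref{l: 4-4}(3) gives $\delta_T L\phi = L\delta_T\phi + d_B\phi = 0$. For $\Lambda$, the identity $[d_B,\Lambda]=\delta_T$ from \Cref{l: 4-3} yields $d_B\Lambda\phi=\Lambda d_B\phi+\delta_T\phi=0$, and $[\Lambda,\delta_T]=0$ from \Cref{l: 4-4}(2) yields $\delta_T\Lambda\phi=\Lambda\delta_T\phi=0$. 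Hence $L\phi$ and $\Lambda\phi$ remain normally symplectic harmonic.

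Next, by \Cref{l: 4-4}(1) the triple $(A,L,\Lambda)$ satisfies the defining relations of $\mathfrak{sl}(2)$, with $A$ the weight operator, $\Lambda$ the raising operator, and $L$ the lowering operator. Since $A$ acts diagonally on $\mathcal H_{ST}^*(\mathcal F)$ with bounded integer eigenvalues $n-k$ for $0\le k\le 2n$, and $L$ and $\Lambda$ are nilpotent for degree reasons, every element generates a finite-dimensional $\mathfrak{sl}(2)$-submodule. Therefore $\mathcal H_{ST}^*(\mathcal F)$ decomposes as a direct sum of finite-dimensional irreducible $\mathfrak{sl}(2)$-representations.

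Finally, within each irreducible summand of highest weight $m$ one has the explicit fact that $L^r$ restricts to an isomorphism between the $A$-weight $r$ subspace and the $A$-weight $-r$ subspace whenever $r\le m$, and both subspaces are zero when $r>m$. Summing over summands, $L^r$ restricts to an isomorphism from the $A$-weight $r$ part $\mathcal H_{ST}^{n-r}(\mathcal F)$ onto the $A$-weight $-r$ part $\mathcal H_{ST}^{n+r}(\mathcal F)$. The only nontrivial step of the argument is the closure of $\mathcal H_{ST}^*(\mathcal F)$ under $\Lambda$, which depends essentially on the identity $[d_B,\Lambda]=\delta_T$; once this is in hand, the conclusion reduces to standard $\mathfrak{sl}(2)$ representation theory.
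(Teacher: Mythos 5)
Your proof is correct and takes essentially the same route as the paper's: the paper's one-line argument just invokes $[L,d_B]=0$ and $[L,\delta_T]=-d_B$ and leaves the rest (closure of $\mathcal H_{ST}^*(\mathcal F)$ under $\Lambda$ via $[d_B,\Lambda]=\delta_T$ and $[\Lambda,\delta_T]=0$, together with the $\operatorname{sl}(2)$-module structure already noted in the remark following \Cref{l: 4-4}) implicit. You have simply written out those omitted steps in full, and correctly.
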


\begin{proof} 
Since $[L,d_B]=0$ and $[L,\delta_T]=-d_B$, the proof  is easy.  
\end{proof}

\begin{rem} 
By \Cref{p: 4-7}, Y.~Lin \cite{LI} studied the existence of normally symplectic harmonic representatives in a basic cohomology class on a transversely symplectic foliation. 
But the operator $\delta_T$ is not a symplectic adjoint operator of $d_B$ when the foliation is not minimal. So it is natural to consider the symplectic formal adjoint $\delta_B$ of $d_B$ instead of $\delta_T$.  But we may have $[L,\delta_B]\ne d_B$ (in fact,  $[L,\delta_B]=-d_B+\epsilon(\kappa)$). Hence, on an isoparametric folitaion, $L^r $ may not preserve the transversely symplectic harmonic forms generally.  But we can overcome this problem by modifying the operators $d_B$ and $\delta_B$ as follows. 
\end{rem}

Now, we consider the modified operators 
\[
d_\kappa = d_B - \frac12 \epsilon(\kappa)\;,\quad \delta_\kappa = \delta_B +\frac12 i(\kappa^\sharp)\;.
\]
It is trivial that, if $\mathcal F$ is isoparametric, then $d_\kappa$ and $\delta_\kappa$ preserve the basic forms.

\begin{prop}\label{p: 4-9}
Let $(\mathcal F,\omega)$ be a transversely symplectic  foliation on a closed manifold $M$. If $\mathcal   F$ is isoparametric, then  $\delta_\kappa$ is the symplectic adjoint operator of $d_\kappa$, i.e.,  for all $\phi\in\Omega_B^r(\mathcal F)$ and $\psi\in\Omega_B^{r+1}(\mathcal F)$,
\[
\int_M\omega(d_\kappa\phi,\psi)\mu_M =\int_M\omega (\phi,\delta_\kappa \psi)\mu_M\;.
\]
\end{prop}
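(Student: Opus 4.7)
The plan is to split each modified operator into its unmodified part and a correction term, apply \Cref{p: 3-4} to handle the unmodified part, and use \Cref{l: 3-2}(3) to handle the correction algebraically (pointwise), with no further integration by parts needed. Explicitly, from the definitions
\[
d_\kappa\phi = d_B\phi -\tfrac12\epsilon(\kappa)\phi\;,\quad \delta_\kappa\psi = \delta_B\psi +\tfrac12 i(\kappa^\sharp)\psi\;,
\]
the isoparametric hypothesis $\kappa\in\Omega_B^1(\mathcal F)$ ensures that both modified operators preserve basic forms, so the integrals in the statement make sense.

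Next, I would compute $\omega(d_\kappa\phi,\psi)$ pointwise. By \Cref{l: 3-2}(3) the pointwise symplectic adjoint of $\epsilon(\kappa)$ is $-i(\kappa^\sharp)$, so
\[
\omega(\epsilon(\kappa)\phi,\psi) = -\omega(\phi, i(\kappa^\sharp)\psi)\;,
\]
and hence
\[
\omega(d_\kappa\phi,\psi) = \omega(d_B\phi,\psi) + \tfrac12\omega(\phi, i(\kappa^\sharp)\psi)\;.
\]
On the other side,
\[
\omega(\phi,\delta_\kappa\psi) = \omega(\phi,\delta_B\psi) + \tfrac12\omega(\phi, i(\kappa^\sharp)\psi)\;.
\]
The two $\tfrac12$-terms match identically at each point of $M$, and therefore integrate to the same value against $\mu_M$.

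The remaining identity $\int_M\omega(d_B\phi,\psi)\mu_M = \int_M\omega(\phi,\delta_B\psi)\mu_M$ is exactly the content of \Cref{p: 3-4}, which requires the isoparametric assumption and is applicable here for the same reason. Combining the two integrated identities gives the claim.

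There is no serious obstacle: the only nontrivial input is the fact that $\epsilon(\kappa)$ and $-i(\kappa^\sharp)$ are fiberwise symplectic adjoints (already recorded in \Cref{l: 3-2}(3)) together with the Stokes/Rummler argument encapsulated in \Cref{p: 3-4}. The mild subtlety worth flagging in the writeup is that the $\tfrac12$ coefficients in $d_\kappa$ and $\delta_\kappa$ are precisely calibrated so that the correction term appears with the same sign on both sides after applying \Cref{l: 3-2}(3); this is what makes the modified pair self-adjoint under the symplectic pairing rather than the Riemannian one.
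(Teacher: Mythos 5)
Your proposal is correct and follows exactly the route the paper intends: its proof of this proposition is the one-line remark that it "follows easily" from \Cref{l: 3-2} and \Cref{p: 3-4}, and your write-up simply fills in those details — the pointwise adjointness $\omega(\epsilon(\kappa)\phi,\psi)=-\omega(\phi,i(\kappa^\sharp)\psi)$ from \Cref{l: 3-2}(3) cancels the two $\tfrac12$-correction terms, and \Cref{p: 3-4} handles the $d_B$/$\delta_B$ part. The sign bookkeeping is right, so nothing further is needed.
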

\begin{proof} From \Cref{l: 3-2} and \Cref{p: 3-4}, the proof follows easily.  
\end{proof}

\begin{lem}  \label{l: 4-10}
Let  $(\mathcal F,\omega)$ be a transversely symplectic foliation.  If $\mathcal F$ is isoparametric, then 
\begin{align}\label{4-7}
[A,d_\kappa]= [L,\delta_\kappa]=-d_\kappa\;,\quad  [A,\delta_\kappa]=[d_\kappa,\Lambda]=\delta_\kappa\;,\quad
[L,d_\kappa]=[\Lambda,\delta_\kappa]=0\;.
\end{align}
In particular, if $M$ is closed, then $d_\kappa^2 = \delta_\kappa^2 =0$.
\end{lem}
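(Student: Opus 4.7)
My plan is to prove each of the six commutator identities in \eqref{4-7} by expanding $d_\kappa = d_B - \tfrac12\epsilon(\kappa)$ and $\delta_\kappa = \delta_B + \tfrac12 i(\kappa^\sharp)$, and then reducing to the commutator identities already available: \Cref{l: 4-3}, \Cref{l: 4-4}, and the decomposition $\delta_B=\delta_T-i(\kappa^\sharp)$ from \Cref{l: 3-7}(1). The crucial preliminary observation is that since $\mathcal F$ is isoparametric, $\kappa\in\Omega_B^1(\mathcal F)$, so by \Cref{l: 3-1} we have $\kappa^\sharp\in\mathfrak X_B(\mathcal F)$; this is what allows us to apply \Cref{l: 4-4}(4) to $X=\kappa^\sharp$, giving $[L,i(\kappa^\sharp)]=-\epsilon(\kappa)$ and $[\Lambda,\epsilon(\kappa)]=-i(\kappa^\sharp)$.

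The identities involving $A$ are purely bookkeeping on degrees: since $\epsilon(\kappa)$ raises degree by $1$ and $i(\kappa^\sharp)$ lowers it by $1$, $[A,\epsilon(\kappa)]=-\epsilon(\kappa)$ and $[A,i(\kappa^\sharp)]=i(\kappa^\sharp)$, and combining with \Cref{l: 4-4}(3) gives $[A,d_\kappa]=-d_\kappa$ and $[A,\delta_\kappa]=\delta_\kappa$. The identities $[L,d_\kappa]=0$ and $[\Lambda,\delta_\kappa]=0$ are also immediate: $[L,d_B]=[L,\epsilon(\kappa)]=0$ by \Cref{l: 4-4}(2), and $[\Lambda,\delta_B]=0$ follows from \Cref{l: 3-7}(1) together with $[\Lambda,\delta_T]=[\Lambda,i(\kappa^\sharp)]=0$.

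The heart of the calculation, and the only place where the factor $\tfrac12$ is essential, is in $[L,\delta_\kappa]$ and its adjoint companion $[d_\kappa,\Lambda]$. Using $\delta_B=\delta_T-i(\kappa^\sharp)$, $[L,\delta_T]=-d_B$ from \Cref{l: 4-4}(3), and $[L,i(\kappa^\sharp)]=-\epsilon(\kappa)$, I get $[L,\delta_B]=-d_B+\epsilon(\kappa)$; then
\begin{equation*}
[L,\delta_\kappa]=[L,\delta_B]+\tfrac12[L,i(\kappa^\sharp)]=(-d_B+\epsilon(\kappa))-\tfrac12\epsilon(\kappa)=-d_B+\tfrac12\epsilon(\kappa)=-d_\kappa.
\end{equation*}
A dual computation, using $[d_B,\Lambda]=\delta_T$ from \Cref{l: 4-3} and $[\epsilon(\kappa),\Lambda]=i(\kappa^\sharp)$, yields $[d_\kappa,\Lambda]=\delta_T-\tfrac12 i(\kappa^\sharp)=\delta_B+\tfrac12 i(\kappa^\sharp)=\delta_\kappa$. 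This symmetric splitting of the ``correction'' $\epsilon(\kappa)$ and $i(\kappa^\sharp)$ between $d_\kappa$ and $\delta_\kappa$ is exactly what makes the $\operatorname{sl}(2)$-relations go through, and I regard this balancing as the main (small) obstacle.

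For the nilpotency statements on a closed $M$, I expand
\begin{equation*}
d_\kappa^2 = d_B^2 - \tfrac12\bigl(d_B\,\epsilon(\kappa)+\epsilon(\kappa)\,d_B\bigr) + \tfrac14\epsilon(\kappa)^2.
\end{equation*}
Now $d_B^2=0$, $\epsilon(\kappa)^2=0$ because $\kappa$ is a $1$-form, and $d_B\epsilon(\kappa)+\epsilon(\kappa)d_B=\epsilon(d_B\kappa)=0$ by \Cref{p: 3-9}; hence $d_\kappa^2=0$. Analogously,
\begin{equation*}
\delta_\kappa^2 = \delta_B^2 + \tfrac12\bigl(\delta_B\,i(\kappa^\sharp)+i(\kappa^\sharp)\,\delta_B\bigr) + \tfrac14 i(\kappa^\sharp)^2,
\end{equation*}
with $\delta_B^2=0$ from $d_B^2=0$ via \Cref{p: 3-4}, $i(\kappa^\sharp)^2=0$, and $\delta_B i(\kappa^\sharp)+i(\kappa^\sharp)\delta_B=\delta_T i(\kappa^\sharp)+i(\kappa^\sharp)\delta_T-2\,i(\kappa^\sharp)^2=0$ by \eqref{3-15}, giving $\delta_\kappa^2=0$ and completing the proof.
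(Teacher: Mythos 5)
Your proof is correct and follows essentially the same route as the paper: the commutator identities \eqref{4-7} are obtained by expanding $d_\kappa$ and $\delta_\kappa$ and invoking \Cref{l: 4-3}, \Cref{l: 4-4} (with the key observation that $\kappa^\sharp\in\mathfrak X_B(\mathcal F)$ so that item (4) applies) and $\delta_B=\delta_T-i(\kappa^\sharp)$, while $d_\kappa^2=0$ comes from $d\kappa=0$ (\Cref{p: 3-9}); the paper merely leaves these expansions to the reader. The only (harmless) deviation is that you prove $\delta_\kappa^2=0$ by a direct expansion using \eqref{3-15}, whereas the paper deduces it from $d_\kappa^2=0$ via the adjointness in \Cref{p: 4-9} --- both rest on the same integration-by-parts input, so the arguments are equivalent.
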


\begin{proof}  
The proof of~\eqref{4-7} follows from~\Cref{l: 4-4}.  From \Cref{p: 3-9}, we get $d_\kappa^2=0$, and therefore $\delta_k^2=0$ by \Cref{p: 4-9}.  
\end{proof}

Note that  $\Delta_k:=d_\kappa\delta_\kappa + \delta_\kappa d_\kappa =0$ because of $\delta_k=[d_k,\Lambda]$ by \Cref{l: 4-10}.   Then  $\ker\Delta_\kappa=\Omega_B(\mathcal F)$.  Hence we define the following.

\begin{defn}
A basic form $\phi$ is said to be a \emph{modified symplectic harmonic form} if $d_\kappa\phi=0$ and $\delta_\kappa\phi=0$.  And the \emph{modified symplectic harmonic space} is defined by
\[
\mathcal H_{SK}^r(\mathcal F) =\{\,\phi\in\Omega_B^r(\mathcal F)\ | \ d_\kappa\phi =\delta_\kappa \phi=0\,\}\;.
\]
\end{defn}

\begin{prop}  \label{p: 4-12}
Let $(\mathcal F,\omega)$ be a transversely symplectic foliation on $M^{p+2n}$. If $\mathcal F$ is isoparametric, then $L^r : \mathcal H_{SK}^{n-r}(\mathcal F)\to \mathcal H_{SK}^{n+r}(\mathcal F)$ is an isomorphism.
\end{prop}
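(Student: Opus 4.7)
The plan is to mirror the proof of \Cref{p: 4-7}, substituting the modified commutation relations from \Cref{l: 4-10} for those of \Cref{l: 4-4} used there, and then invoking the $\mathfrak{sl}(2)$-representation-theoretic principle already exploited in that earlier proposition.

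First, I would check that the $\mathfrak{sl}(2)$-triple $\{A,L,\Lambda\}$ preserves $\mathcal H_{SK}^*(\mathcal F)$. The operator $A$ acts as the scalar $n-k$ on $\mathcal H_{SK}^k(\mathcal F)$, so it trivially preserves each graded piece. For $\phi\in\mathcal H_{SK}^*(\mathcal F)$, the identities $[L,d_\kappa]=0$ and $[L,\delta_\kappa]=-d_\kappa$ from \Cref{l: 4-10} give
\[
d_\kappa L\phi = L d_\kappa\phi = 0\;, \qquad \delta_\kappa L\phi = L\delta_\kappa\phi + d_\kappa\phi = 0\;,
\]
so $L\phi\in\mathcal H_{SK}^*(\mathcal F)$; the dual identities $[d_\kappa,\Lambda]=\delta_\kappa$ and $[\Lambda,\delta_\kappa]=0$ yield analogously that $d_\kappa\Lambda\phi=\Lambda d_\kappa\phi+\delta_\kappa\phi=0$ and $\delta_\kappa\Lambda\phi=\Lambda\delta_\kappa\phi=0$, so $\Lambda\phi\in\mathcal H_{SK}^*(\mathcal F)$. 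Hence $\mathcal H_{SK}^*(\mathcal F)$ is an $\mathfrak{sl}(2)$-submodule of $\Omega_B^*(\mathcal F)$.

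Second, I would invoke standard $\mathfrak{sl}(2)$-representation theory. Since the basic degree is bounded by $2n$, every element of $\mathcal H_{SK}^*(\mathcal F)$ generates a finite-dimensional $\mathfrak{sl}(2)$-submodule; combined with the diagonal action of $A$ with integer eigenvalues in $[-n,n]$, this forces $\mathcal H_{SK}^*(\mathcal F)$ to decompose as a direct sum of finite-dimensional irreducible $\mathfrak{sl}(2)$-modules. On each irreducible piece $V_m$, the lowering operator $L^r$ restricts to an isomorphism between the weight-$r$ and weight-$(-r)$ subspaces (both trivial when $m<r$). Because $A$ acts on $\mathcal H_{SK}^k(\mathcal F)$ as $n-k$, these weight subspaces coincide with $\mathcal H_{SK}^{n-r}(\mathcal F)$ and $\mathcal H_{SK}^{n+r}(\mathcal F)$; summing over all irreducible components yields the desired isomorphism.

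I do not anticipate a substantial obstacle. The modifications $d_\kappa$ and $\delta_\kappa$ were designed precisely to restore the clean $\mathfrak{sl}(2)$-style commutation relations (the commutator $[L,\delta_B]=-d_B+\epsilon(\kappa)$ fails for the unmodified operators, as the paper already remarks), and \Cref{l: 4-10} packages exactly what is needed. The surjectivity of $L^r$---which is the substantive half of the claim---then becomes automatic once $\Lambda$ is known to preserve $\mathcal H_{SK}^*(\mathcal F)$ and the $\mathfrak{sl}(2)$-decomposition is in hand; if desired, one may even exhibit the inverse explicitly by observing that $L^r\Lambda^r$ restricts to multiplication by a nonzero scalar on the weight-$(-r)$ component of each irreducible summand.
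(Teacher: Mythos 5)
Your proposal is correct and follows exactly the route the paper intends: the paper's proof of \Cref{p: 4-12} simply says ``By \Cref{l: 4-10}, the proof is easy,'' relying on the commutation relations $[L,d_\kappa]=0$, $[L,\delta_\kappa]=-d_\kappa$, $[d_\kappa,\Lambda]=\delta_\kappa$, $[\Lambda,\delta_\kappa]=0$ to show $\mathcal H_{SK}^*(\mathcal F)$ is an $\operatorname{sl}(2)$-submodule, together with the finite-spectrum $\operatorname{sl}(2)$ duality already noted in the remark following \Cref{l: 4-4}. You have merely written out the details that the paper leaves implicit.
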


\begin{proof} 
By \Cref{l: 4-10}, the proof is easy. 
\end{proof}

On a closed manifold,  $d_\kappa^2=0$ by \Cref{l: 4-10}.  So  the cohomology group $H_\kappa^r(\mathcal F)=\ker d_\kappa/\operatorname{im}d_\kappa$ (called as {\it modified basic cohomology group}) is defined on a closed manifold.  See \cite{HR} for many properties of this  modified basic cohomology. 
Let
\[
\widetilde H_\kappa^r (\mathcal F) =\{\, [\phi]\in H_\kappa^r(\mathcal F) \ |\ d_k\phi = \delta_k\phi =0\,\}\;.
\]
A basic form $\phi$ is said to be \emph{primitive} if $\Lambda\phi=0$. Then  a basic form $\phi \in\Omega_B^r(\mathcal  F)$ is primitive if and only if 
$L^{n-r+1}\phi=0$.
Let
\begin{align*}
P\Omega_B^r(\mathcal F) &=\{\,\phi\in\Omega_B^r(\mathcal F)\  | \ L^{n-r+1}\phi=0\,\}\;,\\
PH_\kappa^r(\mathcal F) &=\{[\phi]\in H_\kappa^r(\mathcal F)\  | \ L^{n-r+1}[\phi]=0\}\;.
\end{align*}

\begin{thm}  \label{t: 4-13}
Let $(\mathcal F,\omega)$ be a transversely symplectic foliation of codimension $2n$ on a  closed manifold $M^{p+2n}$. If $\mathcal F$ is tense, then the following properties are equivalent:
\begin{enumerate}[{\rm(1)}]

\item Any basic cohomology class contains at least one modified symplectic harmonic form, that is, $\widetilde H_\kappa^* (\mathcal F) = H_\kappa^*(\mathcal F)$.

\item For any $r\leq n$,  the homomorphism $L^r: H_\kappa^{n-r}(\mathcal F) \to H_\kappa^{n+r}(\mathcal F)$ is surjective.

\end{enumerate} 
\end{thm}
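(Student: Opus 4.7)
I treat the two implications separately.

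\emph{Direction $(1) \Rightarrow (2)$.} This is immediate from \Cref{p: 4-12}. Given $[\beta] \in H_\kappa^{n+r}(\mathcal F)$, hypothesis~(1) lets me replace $\beta$ by a modified symplectic harmonic representative in $\mathcal H_{SK}^{n+r}(\mathcal F)$. Since \Cref{p: 4-12} asserts that $L^r: \mathcal H_{SK}^{n-r}(\mathcal F) \to \mathcal H_{SK}^{n+r}(\mathcal F)$ is an isomorphism, one has $\beta = L^r\alpha$ for some $\alpha \in \mathcal H_{SK}^{n-r}(\mathcal F)$, and hence $[\beta] = L^r[\alpha]$ in $H_\kappa^*(\mathcal F)$, so $L^r$ is surjective on cohomology.

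\emph{Direction $(2) \Rightarrow (1)$.} The plan is to adapt the Mathieu--Yan proof of the classical symplectic hard Lefschetz theorem to the modified differential $d_\kappa$. Tenseness lets me fix a bundle-like metric making $\mathcal F$ isoparametric, and then \Cref{l: 4-4} together with \Cref{l: 4-10} gives $\Omega_B^*(\mathcal F)$ the structure of an $\operatorname{sl}(2)$-module under $\{L, \Lambda, A\}$ on which the modified operators obey the commutators $[L, d_\kappa] = 0$, $[d_\kappa, \Lambda] = \delta_\kappa$, $[\Lambda, \delta_\kappa] = 0$, $[L, \delta_\kappa] = -d_\kappa$ together with $d_\kappa^2 = \delta_\kappa^2 = 0$. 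These relations are formally identical to those satisfied by $d$ and $\delta$ in the classical symplectic case, so the whole algebraic machinery of that setting is available.

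Pointwise finite-dimensionality of the $\operatorname{sl}(2)$-representation yields the Lefschetz decomposition $\phi = \sum_{k \geq \max(0, s-n)} L^k \phi_k$ with $\phi_k \in P\Omega_B^{s-2k}(\mathcal F)$ for every $\phi \in \Omega_B^s(\mathcal F)$. A short computation using $[L, d_\kappa] = 0$, $[d_\kappa, \Lambda] = \delta_\kappa$ and $\Lambda\phi_k = 0$ yields Mathieu's criterion in this setting: $\phi$ is modified symplectic harmonic if and only if each $\phi_k$ is $d_\kappa$-closed. It therefore suffices to show that every class in $H_\kappa^*(\mathcal F)$ admits a representative whose primitive Lefschetz components are all $d_\kappa$-closed. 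Starting from a $d_\kappa$-closed $\phi = \sum L^k \phi_k$, the identity $\sum_k L^k d_\kappa\phi_k = 0$ (forced by $[L, d_\kappa] = 0$) couples the defect terms across Lefschetz components, and I plan to correct the $\phi_k$ one by one: at each step, surjectivity of $L^r: H_\kappa^{n-r}(\mathcal F) \to H_\kappa^{n+r}(\mathcal F)$ supplies a $d_\kappa$-closed primitive form whose $L^k$-image is $d_\kappa$-cohomologous to the current obstruction, and subtracting the resulting $d_\kappa$-exact correction kills that component without disturbing those of lower Lefschetz index. Induction on $k$ then produces the desired representative.

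\emph{Main obstacle.} The crux is this inductive correction step: although its algebra is formally identical to Yan's, one must verify that each manipulation rests solely on the commutators of \Cref{l: 4-10}, the nilpotency $d_\kappa^2 = 0$, and the surjectivity hypothesis~(2), and not on any stronger condition such as tautness or $\kappa = 0$. The sl(2)-weight bookkeeping across Lefschetz indices is delicate because $\delta_\kappa$ (rather than the $\delta_T$ of \cite{LI,BC}) is the true symplectic adjoint, so I expect this to be the most technically involved part of the argument. Once this is complete, Mathieu's criterion yields $\widetilde H_\kappa^*(\mathcal F) = H_\kappa^*(\mathcal F)$, which is property~(1).
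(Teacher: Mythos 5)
Your direction $(1)\Rightarrow(2)$ is correct and is exactly the paper's argument: pass to a modified symplectic harmonic representative and apply \Cref{p: 4-12}.

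For $(2)\Rightarrow(1)$, however, what you have written is a plan whose central step is missing, and you say so yourself: the ``inductive correction'' of the primitive Lefschetz components $\phi_k$ is declared the main obstacle and is never carried out. As stated it is not clearly sound. The defect identity $\sum_k L^k d_\kappa\phi_k=0$ does couple the components, but subtracting a $d_\kappa$-exact form from $\phi$ changes \emph{all} of its Lefschetz components simultaneously, so the claim that each correction ``kills that component without disturbing those of lower Lefschetz index'' needs a genuine argument (this is essentially the delicate part of Mathieu's original proof), and the phrase ``the current obstruction'' is never pinned down as a well-defined cohomology class to which surjectivity of $L^r$ can be applied. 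So there is a real gap, not just an omitted routine verification.

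The paper avoids this entirely with Yan's streamlined induction on the degree, which you should adopt. Assume the statement for degrees $s<n-r$ and take $[\phi]\in H_\kappa^{n-r}(\mathcal F)$. Surjectivity of $L^{r+2}:H_\kappa^{n-r-2}(\mathcal F)\to H_\kappa^{n+r+2}(\mathcal F)$ gives $[\psi]$ with $L^{r+1}([\phi]-L[\psi])=0$, so $H_\kappa^{n-r}(\mathcal F)=PH_\kappa^{n-r}(\mathcal F)+\operatorname{im}L$; the summand $\operatorname{im}L$ is handled by the induction hypothesis together with the fact that $L$ preserves modified harmonic representatives (\Cref{l: 4-10}, \Cref{p: 4-12}). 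For a primitive class, write $L^{r+1}\phi=d_\kappa\psi$, use the $\operatorname{sl}(2)$ isomorphism $L^{r+1}:\Omega_B^{n-r-1}(\mathcal F)\to\Omega_B^{n+r+1}(\mathcal F)$ (pure linear algebra, no hypothesis needed) to write $\psi=L^{r+1}\eta$, and conclude $L^{r+1}(\phi-d_\kappa\eta)=0$, i.e.\ $\Lambda(\phi-d_\kappa\eta)=0$, whence $\delta_\kappa(\phi-d_\kappa\eta)=[d_\kappa,\Lambda](\phi-d_\kappa\eta)=0$. This replaces your entire componentwise correction scheme by a single application of the form-level isomorphism, uses only the commutators of \Cref{l: 4-10} and $d_\kappa^2=0$ (so your worry about hidden tautness assumptions does not arise), and the degrees above $n$ then follow from surjectivity of $L^r$ plus preservation of harmonic representatives.
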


\begin{proof}
The proof is similar to ones in \cite{Pa} and \cite{Ya}.   In fact,  assume  $\widetilde H_\kappa^* (\mathcal F) = H_\kappa^*(\mathcal F)$. So the canonical map $\mathcal H_{SK}^r(\mathcal F) \to H_\kappa^r(\mathcal F)$ is surjective. Hence, by \Cref{p: 4-12},  it is trivial that $L^r : H_\kappa^{n-r} (\mathcal F) \to H_\kappa^{n+r}(\mathcal F)$ is surjective.   

Conversely, assume that for any $r\leq n$,  the map $L^r: H_\kappa^{n-r}(\mathcal F)\to H_\kappa^{n+r}(\mathcal F)$ is surjective.  By induction on $r$, we prove  that $\widetilde H_\kappa^r(\mathcal F) = H_\kappa^r(\mathcal F)$. First, it is trivial that  $\widetilde H_\kappa^0(\mathcal F) = H_\kappa^0 (\mathcal F)$. 

For $r=1$, let $[\phi]\in H_\kappa^1(\mathcal F)$. We have $d_k\phi=0$  and $\delta_k\phi = [d_k,\Lambda]\phi =0$ because $\Lambda\phi=0$.  Hence $\widetilde H_\kappa^1 (\mathcal F) = H_\kappa^1(\mathcal F)$. 

Now, assume that this property holds for $s<n-r$, and let us show that 
\begin{equation} \label{4-13}
\widetilde H_\kappa^{n-r}(\mathcal F) = H_\kappa^{n-r}(\mathcal F)\;.
\end{equation}
Trivially, $\widetilde H_\kappa^{n-r}(\mathcal F)\subset H_\kappa^{n-r}(\mathcal F)$.  Let $[\phi]\in H_\kappa^{n-r}(\mathcal F)$.  Since $L^{r+2}:H_\kappa^{n-r-2}(\mathcal F)\to H_\kappa^{n+r+2}(\mathcal F)$ is surjective by assumption,  there exists $[\psi]\in H_\kappa^{n-r-2}(\mathcal F)$ such that $L^{r+1}[\phi] = L^{r+2}[\psi]$; that is, $L^{r+1}([\phi]-L[\psi])=0$.   Hence $[\phi] -L[\psi] \in PH_\kappa^{n-r}(\mathcal F)$.  So
\[
[\phi] = ([\phi] - L[\psi]) + L[\psi]\;,
\]
which means
\[
H_\kappa^{n-r}(\mathcal F) =  PH_\kappa^{n-r}(\mathcal F) + \operatorname{im} L\;.
\]
By induction and \Cref{l: 4-10}, it is well known that any class of $\operatorname{im} L$ contains a modified symplectic harmonic representative; that is, $\operatorname{im}L \subset \widetilde H_\kappa^{n-r}(\mathcal F)$. Therefore it suffices to show
\begin{align}\label{4-17}
PH_\kappa^{n-r}(\mathcal F)\subset \widetilde H_\kappa^{n-r}(\mathcal F)\;.
\end{align}
Let $[\phi]\in PH_\kappa^{n-r}(\mathcal F)$; that is, $L^{r+1}[\phi]=0$.  Then there exists some $\psi \in \Omega_B^{n+r+1}(\mathcal F)$ such that 
\begin{align}\label{4-15}
L^{r+1}\phi = d_\kappa \psi\;.
\end{align}
Moreover,  by the representation theory of $\operatorname{sl}(2)$,  the map  $L^{r+1} :\Omega_B^{n-r-1}(\mathcal F) \to \Omega_B^{n+r+1}(\mathcal F)$ is an isomorphism. Hence   there exists $\eta\in \Omega_B^{n-r-1}(\mathcal F)$ such that 
\begin{align}\label{4-16}
\psi = L^{r+1}(\eta)\;.
\end{align}
From~\eqref{4-15} and~\eqref{4-16}, we get
\[
L^{r+1} (\phi- d_\kappa\eta)=0\;;
\]
that is, $\Lambda (\phi - d_\kappa\eta)=0$. Hence 
\[
\delta_\kappa(\phi-d_\kappa\eta) =[d_\kappa,\Lambda](\phi-d_\kappa\eta) =0\;.
\]
Therefore $\phi-d_\kappa\eta$ is a modified symplectic harmonic representative of $[\phi]$, showing~\eqref{4-17}.
\end{proof}

\section{Case of transverse K\"ahler foliation}
In this section, we consider a transverse K\"ahler foliation $(\mathcal F,J,\omega)$ of codimension $q=2n$ on a closed Riemannian manifol $M$.
Here $\omega$ is a basic  K\"ahler 2-form  and $J$ is a holonomy invariant almost complex structure 
on $Q$ such that  $\nabla J=0$,
where $\nabla$ is the transversal Levi-Civita connection on $Q$, extended in the usual way to tensors \cite{NT}. 

Let $\Lambda_{\mathbb C} Q^*$  be  the complexification of $\Lambda Q^*$. Then $\Lambda^1_{\mathbb C} Q^*= Q_{1,0} + Q_{0,1}$, where $Q_{1,0} $ and $Q_{0,1}$ are the eigenspaces of $J$ with eigenvalues $-i$ and $i$, respectively. 
Write $%
\kappa _{B}=\kappa _{B}^{1,0}+\kappa _{B}^{0,1}$, with 
\[
\kappa _{B}^{1,0}=\frac{1}{2}(\kappa _{B}+iJ\kappa _{B})\;,\quad\kappa _{B}^{0,1}=\overline{\kappa _{B}^{1,0}}\;,
\]%
where $\kappa_B$ is the basic part of the mean curvature form $\kappa$ \cite{Al}.
Note that there exists a bundle-like metric such that  $\partial_B^*\kappa^{1,0}=0$, where $d_B=\partial_B +\bar\partial_B$ on $\Omega_B^*(\mathcal F)\otimes \mathbb C$.  But we do not expect that $\partial_B\kappa_B^{0,1}$ would be in general zero for any metric \cite{JK}. Hence we get the following theorem.

\begin{thm}[Hard Lefschetz Theorem \cite{JK}]\label{HardLefschetzTheorem} 
Let $(\mathcal{F},J,\omega)$ be a transverse K\"{a}hler
foliation of codimension $2n$ on a closed Riemannian manifold  with compatible
bundle-like metric. Suppose that the class $\left[ \partial
_{B}\kappa _{B}^{0,1}\right] \in H_{\partial _{B}\bar{\partial}%
_{B}}^{1,1}(\mathcal{F}) $ is trivial. Then the hard Lefschetz
theorem holds for basic Dolbeault cohomology; that is, the map $L^{r}:H_{B}^{n-r}(\mathcal{F}) \rightarrow H_{B}^{n+r}(\mathcal{F})$ is an isomorphism. 
\end{thm}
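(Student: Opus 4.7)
The plan is to adapt the classical K\"ahler proof of the hard Lefschetz theorem to the basic Dolbeault complex. First I would introduce the transverse Dolbeault operators $\partial_B,\bar\partial_B$ with $d_B=\partial_B+\bar\partial_B$ on the bigraded complex $\Omega_B^{*,*}(\mathcal F)\otimes\mathbb C$, together with their formal adjoints $\partial_B^*,\bar\partial_B^*$ with respect to a chosen bundle-like metric compatible with $J$ and $\omega$. The heart of the argument is to establish transverse K\"ahler identities of the form $[\Lambda,\bar\partial_B]=-i\,\partial_B^{\,*}$ and $[\Lambda,\partial_B]=i\,\bar\partial_B^{\,*}$ on this complex, and to deduce the equality of Laplacians $\Delta_{d_B}=2\Delta_{\partial_B}=2\Delta_{\bar\partial_B}$ acting on basic forms.

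The hypothesis $[\partial_B\kappa_B^{0,1}]=0$ in $H_{\partial_B\bar\partial_B}^{1,1}(\mathcal F)$ enters precisely here. Computing the commutators $[\Lambda,\bar\partial_B]$ and $[\Lambda,\partial_B]$ directly in the foliated setting produces correction terms involving $\kappa_B^{1,0}$ and $\kappa_B^{0,1}$ that obstruct the clean K\"ahler identities, and the corresponding obstruction to the equality of Laplacians is measured by $\partial_B\kappa_B^{0,1}$. The vanishing of its $\partial_B\bar\partial_B$-cohomology class yields, via a transverse $\partial\bar\partial$-lemma, a basic function $f$ with $\partial_B\bar\partial_B f=\partial_B\kappa_B^{0,1}$; performing a conformal deformation of the bundle-like metric by $f$ (or, equivalently, a gauge change in the modified adjoint $\partial_B^*$) cancels the correction terms and restores the pure K\"ahler identities.

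With the K\"ahler identities in force, $\Delta_{d_B}$ commutes with both $L$ and $\Lambda$, so the basic harmonic space $\mathcal H_B^*(\mathcal F)$ is stable under the $\mathfrak{sl}(2)$-action generated by $A$, $L$, $\Lambda$ from \Cref{l: 4-4}(1). Finite-dimensional representation theory of $\mathfrak{sl}(2)$ then shows that on each weight space one has an isomorphism
\begin{equation*}
L^{r}\colon \mathcal H_B^{n-r}(\mathcal F)\longrightarrow \mathcal H_B^{n+r}(\mathcal F),
\end{equation*}
since $L^r$ intertwines the $A$-weights $-r$ and $r$ on the finite-dimensional irreducible summands. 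The Kamber--Tondeur Hodge isomorphism $\mathcal H_B^*(\mathcal F)\cong H_B^*(\mathcal F)$ then transports this into the asserted isomorphism at the level of basic cohomology.

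The main obstacle is the middle step: tracking the precise mean-curvature corrections in the transverse K\"ahler identities and verifying that the cohomological triviality of $[\partial_B\kappa_B^{0,1}]$ is exactly the condition needed to produce a bundle-like metric killing them. The subsidiary difficulty is justifying a basic $\partial_B\bar\partial_B$-lemma in sufficient generality, which typically relies on the three-space decomposition for $\partial_B$ and $\bar\partial_B$ together with the compatibility between $\Delta_{\partial_B}$ and $\Delta_{\bar\partial_B}$; both are available once the K\"ahler identities are in place, so a careful bootstrapping between the identities and the lemma is required.
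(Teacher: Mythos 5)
The paper itself contains no proof of \Cref{HardLefschetzTheorem}: the statement is imported verbatim from \cite{JK}, so there is no in-paper argument to measure your proposal against. Judging the proposal on its own terms, however, there is a genuine gap at exactly the step you flag as the main obstacle.

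Your overall shape (transverse K\"ahler identities with mean-curvature corrections, $\mathfrak{sl}(2)$-stability of the basic harmonic space, transport through the basic Hodge isomorphism) is the right template, but the mechanism you propose for removing the corrections does not work as described. First, a small point: triviality of $\left[\partial_B\kappa_B^{0,1}\right]$ in $H_{\partial_B\bar\partial_B}^{1,1}(\mathcal F)$ already means $\partial_B\kappa_B^{0,1}=\partial_B\bar\partial_B f$ for some basic function $f$; no transverse $\partial\bar\partial$-lemma is needed to produce $f$. The real problem is the next sentence. A conformal rescaling of the leafwise metric changes $\kappa_B$ by $d_Bh$ for a real basic function $h$, hence changes $\partial_B\kappa_B^{0,1}$ by $\partial_B\bar\partial_B h$; so you can arrange $\partial_B\kappa_B^{0,1}=0$ pointwise, but that does \emph{not} ``restore the pure K\"ahler identities.'' The corrections to $[\Lambda,\bar\partial_B]=-i\,\partial_B^{\,*}$ and to the adjoints involve $\kappa_B$ itself, through exterior and interior products with $\kappa_B^{1,0}$ and $\kappa_B^{0,1}$ (compare $\delta_B=\delta_T-i(\kappa^\sharp)$ in \Cref{l: 3-7}), and these operators vanish only when $\kappa_B=0$, i.e., when the metric is minimal. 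Producing such a metric is the assertion that $\mathcal F$ is taut, which by \Cref{t: 5-2} is indeed equivalent to the hypothesis --- but that equivalence is itself the hard theorem of \cite{JK}, and the logical structure of Section 5 (where \Cref{t: 5-2} is assembled \emph{using} the hard Lefschetz statement together with the fact that hard Lefschetz forces $H_B^{2n}(\mathcal F)\neq 0$ and hence tautness) shows that \Cref{HardLefschetzTheorem} must be proved directly, without passing through tautness. As written, your argument either assumes the conclusion of \Cref{t: 5-2} (circularity) or leaves unproved the one identity on which everything rests: that after the gauge change by $f$ the residual $\kappa_B$-terms still combine to give $[\Delta_B,L]=0$ on basic forms (or an equivalent statement at the level of cohomology classes). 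That computation is the actual content of the theorem and would need to be supplied.
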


On the other hand, we have the following.

\begin{thm}[See \cite{JK}]\label{t: 5-2}
Let $(\mathcal{F},J,\omega)$
be a transverse K\"{a}hler foliation of codimension $2n$ on a compact
Riemannian manifold with  compatible bundle-like metric. Then the following properties are
equivalent:
\begin{enumerate}[{\rm(1)}]

\item The class $\left[ \kappa _{B}\right] \in H_{B}^{1}(\mathcal{F})$ is trivial; that is, $(M,\mathcal{F})$ is taut.

\item The class $\left[ \partial _{B}\kappa _{B}^{0,1}\right] \in
H_{\partial _{B}\bar{\partial}_{B}}^{1,1}(\mathcal{F}) $ is
trivial.

\item The  hard Lefschetz theorem holds for basic Dolbeault cohomology.

\end{enumerate}
\end{thm}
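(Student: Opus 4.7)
The plan is to establish the cyclic chain of implications $(1) \Rightarrow (2) \Rightarrow (3) \Rightarrow (1)$; each step calls on a different ingredient, which makes this the most economical route.

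For $(1) \Rightarrow (2)$, the hypothesis is that $\kappa_B = d_B f$ for some basic function $f$. Decomposing by bidegree yields $\kappa_B^{1,0} = \partial_B f$ and $\kappa_B^{0,1} = \bar\partial_B f$, so
\[
\partial_B \kappa_B^{0,1} = \partial_B \bar\partial_B f,
\]
which is a $\partial_B \bar\partial_B$-coboundary. Hence $[\partial_B \kappa_B^{0,1}] = 0$ in $H^{1,1}_{\partial_B \bar\partial_B}(\mathcal F)$.

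For $(2) \Rightarrow (3)$, I would simply invoke the preceding \Cref{HardLefschetzTheorem}, which is stated in exactly the form required: assuming $[\partial_B \kappa_B^{0,1}] = 0$, one obtains that $L^r : H_B^{n-r}(\mathcal F) \to H_B^{n+r}(\mathcal F)$ is an isomorphism.

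For $(3) \Rightarrow (1)$, I would specialize the hard Lefschetz isomorphism to $r = n$: the map $L^n : H_B^0(\mathcal F) \to H_B^{2n}(\mathcal F)$ is then an isomorphism, sending the nonzero class of the constant function $1$ to $[\omega^n]$. In particular $H_B^{2n}(\mathcal F) \neq 0$. By the characterization of tautness for transversely orientable Riemannian foliations on a closed manifold due to \cite{Al} (recalled in the remark following \Cref{p: 2.3}), this non-vanishing is equivalent to $\mathcal F$ being taut, i.e.\ to $[\kappa_B] = 0$.

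The principal obstacle lies in the foundational machinery behind $(1) \Rightarrow (2)$: one needs the Bott-Chern-type group $H^{1,1}_{\partial_B \bar\partial_B}(\mathcal F)$ to be well defined in the transverse K\"ahler setting, and the decomposition $d_B = \partial_B + \bar\partial_B$ together with the conjugation $\overline{\kappa_B^{1,0}} = \kappa_B^{0,1}$ to behave exactly as in the classical K\"ahler case. These rest on the transverse Hodge theory developed in \cite{NT}. Once they are in place, each of the three implications above is short and essentially formal.
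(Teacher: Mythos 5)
The paper does not actually prove this statement: it is imported verbatim from \cite{JK} (``See \cite{JK}''), so there is no in-paper argument to compare yours against. Judged on its own, your cyclic scheme $(1)\Rightarrow(2)\Rightarrow(3)\Rightarrow(1)$ is sound and is essentially the natural route (and close to the one in \cite{JK}): $(1)\Rightarrow(2)$ is the formal step $\kappa_B=d_Bf$ with $f$ real basic, hence $\kappa_B^{0,1}=\bar\partial_Bf$ and $\partial_B\kappa_B^{0,1}=\partial_B\bar\partial_Bf$, which is trivial in $H^{1,1}_{\partial_B\bar\partial_B}(\mathcal F)$ precisely because that group is taken modulo $\operatorname{im}(\partial_B\bar\partial_B)$; $(2)\Rightarrow(3)$ is exactly \Cref{HardLefschetzTheorem}; and $(3)\Rightarrow(1)$ follows from $L^n:H_B^0(\mathcal F)\to H_B^{2n}(\mathcal F)$ being an isomorphism, so $H_B^{2n}(\mathcal F)\neq 0$, which for a transversely oriented Riemannian foliation on a closed manifold is equivalent to tautness by \cite{Al} (together with Masa's theorem), as recalled in the remark after \Cref{p: 2.3}. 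Two small points deserve explicit mention: first, in $(3)\Rightarrow(1)$ you should read ``hard Lefschetz for basic Dolbeault cohomology'' the way \Cref{HardLefschetzTheorem} glosses it, namely as the statement about the basic de Rham groups $H_B^{*}(\mathcal F)$, since without tautness one does not yet have the Hodge decomposition needed to pass between $H^{n,n}$ and $H_B^{2n}$; second, your attribution of the foundational machinery to \cite{NT} is slightly off --- the well-definedness of the class $[\partial_B\kappa_B^{0,1}]$ and of $H^{1,1}_{\partial_B\bar\partial_B}(\mathcal F)$ is developed in \cite{JK} itself (using \cite{Al} for the basic component $\kappa_B$), not in \cite{NT}. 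Neither point is a gap in substance.
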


From \Cref{t: 4-13,HardLefschetzTheorem,t: 5-2}, we get the following corollary.

\begin{cor} 
Let $(\mathcal{F},J,\omega)$ be a transverse K\"{a}hler
foliation of codimension $2n$ on a closed Riemannian manifold with compatible
bundle-like metric. Suppose that the class $\left[\partial_{B}\kappa _{B}^{0,1}\right] \in H_{\partial _{B}\bar{\partial}_{B}}^{1,1}(\mathcal{F})$ is trivial.  Then any basic cohomology class for $\mathcal F$ has a transversely symplectic harmonic representative.
\end{cor}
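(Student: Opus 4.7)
The plan is to reduce this corollary to a combination of \Cref{t: 4-13}, \Cref{HardLefschetzTheorem}, and \Cref{t: 5-2}, by exploiting the fact that the hypothesis forces tautness. The key observation is that when the mean curvature form vanishes, all the ``modified'' objects introduced in Section~4 collapse to the unmodified ones.

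First, I would invoke \Cref{t: 5-2}: since the class $[\partial_B \kappa_B^{0,1}] \in H^{1,1}_{\partial_B \bar\partial_B}(\mathcal F)$ is trivial, the foliation $(\mathcal F, J, \omega)$ is taut. Therefore, replacing the bundle-like metric if necessary, we may assume that the mean curvature form satisfies $\kappa = 0$ on $M$. In particular, $\mathcal F$ is isoparametric (indeed tense), so \Cref{t: 4-13} is applicable.

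Next, with $\kappa = 0$, the modified differentials from Section~4 simplify:
\[
d_\kappa = d_B - \tfrac12\,\epsilon(\kappa) = d_B\;,\qquad \delta_\kappa = \delta_B + \tfrac12\,i(\kappa^\sharp) = \delta_B\;,
\]
and, by \Cref{l: 3-7}(1), also $\delta_B = \delta_T$. Consequently the modified basic cohomology agrees with the ordinary basic cohomology, $H_\kappa^*(\mathcal F) = H_B^*(\mathcal F)$, and the modified symplectic harmonic space $\mathcal H_{SK}^*(\mathcal F)$ coincides with the transversely symplectic harmonic space $\mathcal H_{SB}^*(\mathcal F)$.

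Under the same triviality hypothesis, \Cref{HardLefschetzTheorem} yields that $L^r : H_B^{n-r}(\mathcal F) \to H_B^{n+r}(\mathcal F)$ is an isomorphism for every $r \leq n$, hence in particular surjective. Translating this through the identifications $H_\kappa^* = H_B^*$, we see that condition~(2) of \Cref{t: 4-13} is fulfilled. Applying that theorem now gives condition~(1): every class in $H_\kappa^*(\mathcal F) = H_B^*(\mathcal F)$ contains a modified symplectic harmonic representative, which under our choice of metric is exactly a transversely symplectic harmonic form. There is no real obstacle here beyond bookkeeping; the entire argument is the observation that the triviality hypothesis gives us both tautness (to reduce modified to ordinary objects) and the hard Lefschetz surjectivity needed to invoke \Cref{t: 4-13}.
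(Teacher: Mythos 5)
Your proof is correct and follows essentially the same route as the paper: invoke \Cref{t: 5-2} to get tautness from the triviality of $[\partial_B\kappa_B^{0,1}]$, observe that the modified objects ($d_\kappa$, $\delta_\kappa$, $H_\kappa^*$) then reduce to the unmodified ones so that $H_\kappa^*(\mathcal F)=H_B^*(\mathcal F)$, and conclude by combining \Cref{HardLefschetzTheorem} with \Cref{t: 4-13}. Your version merely spells out the bookkeeping (choosing a metric with $\kappa=0$ and identifying $\mathcal H_{SK}^*$ with $\mathcal H_{SB}^*$) that the paper leaves implicit.
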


\begin{proof}  
Since $\left[ \partial_{B}\kappa _{B}^{0,1}\right]=0$ in $H_{\partial _{B}\bar{\partial}_{B}}^{1,1}(\mathcal{F}) $, by \Cref{t: 5-2}, we know that $[\kappa_B]=0$. So
\[
H_\kappa^*(\mathcal F) = H_B^*(\mathcal F)= H_T^*(\mathcal F)\;.
\]
Hence by \Cref{t: 4-13,HardLefschetzTheorem}, any basic cohomology class  has a transversely symplectic harmonic representative.
\end{proof}

\noindent{\em Acknowledgement.} The authors would like to thank  Professors  G.~Habib, Y.~Lin, J.I.~Royo Prieto and R.~Wolak   for  useful comments.

\end{document}